\DeclareFontFamily{OMX}{mlmex}{}
\DeclareFontShape{OMX}{mlmex}{m}{n}{%
   <->mlmex10%
   }{}%
\theoremstyle{plain}
\newtheorem{theorem}{Theorem}
\newtheorem{proposition}[theorem]{Proposition}
\newtheorem{lemma}[theorem]{Lemma}
\theoremstyle{definition}
\newtheorem{remark}[theorem]{Remark}
\newcommand\plusbas{\vphantom{X^X}}
\newcommand\cX[1][]{%
  \if\relax\detokenize{#1}\relax\else{}_{\plusbas#1}\mskip-1mu\relax\fi
  \mathcal{X}%
}
\newcommand\cY[1][]{%
  \if\relax\detokenize{#1}\relax\else{}_{\plusbas#1}\mskip-1mu\relax\fi
  \mathcal{Y}%
}
\newcommand\sD[1][]{%
  \if\relax\detokenize{#1}\relax\else{}_{\plusbas#1}\mskip-1mu\relax\fi
  \mathsf{D}%
}
\newcommand\bsD[1][]{%
  \if\relax\detokenize{#1}\relax\else{}_{\plusbas#1}\mskip-1mu\relax\fi
  \boldsymbol{\mathsf{D}}%
}
\newcommand\NN{\mathbb{N}}
\newcommand\ZZ{\mathbb{Z}}
\newcommand\ld{\mathrm{ld}}
\newcommand\sLD{\mathsf{LD}}
\newcommand\dx{\mathrm{d}\mskip-1mu x}
\newcommand\muld[1]{{}_{\plusbas#1}\mskip-1mu\mu}
\newcommand\nuld[1]{{}_{\plusbas#1}\mskip-1mu\nu}
\newcommand\rnot[1]{(\setminus #1)}
\begin{document}

\title[Exactly one 42]{%
  Summing the  ``exactly
  one $42$'' and similar subsums of the harmonic series%
}

\author[J.-F. Burnol]{Jean-François Burnol}

\address{Université de Lille,
  Faculté des Sciences et technologies,
  Département de mathématiques,
  Cité Scientifique,
  F-59655 Villeneuve d'Ascq cedex,
  France}
\email{jean-francois.burnol@univ-lille.fr}
\date{February 22, 2024}


\subjclass[2020]{Primary 11Y60, 11B75; Secondary 11A63, 65B10, 44A60;}
\keywords{Kempner series, Irwin series, Schmelzer-Baillie series}

\begin{abstract}
  For $b>1$ and $\alpha\beta$ a string of two digits in base $b$, let $K_1$ be
  the subsum of the harmonic series with only those integers having exactly
  one occurrence of $\alpha\beta$.  We obtain a theoretical representation of
  such $K_1$ series which, say for $b=10$, allows computing them all to
  thousands of digits.  This is based on certain specific measures on the unit
  interval and the use of their Stieltjes transforms at negative integers.
  Integral identities of a combinatorial nature both explain the relation to
  the $K_1$ sums and lead to recurrence formulas for the measure moments
  allowing in the end the straightforward numerical implementation.
\end{abstract}

\maketitle

\section{Introduction}

We obtain a  numerically
efficient theoretical representation of the sums
\[ K_1 = \sum\nolimits'_{n>0} \frac1n\;,\]
where the prime symbol means that the contributing integers are those which
contain exactly \emph{one} instance of a given two-digit string $\alpha\beta$
of base $b$ digits.  More generally one defines sums $K_i$ where the integers
contributing are those having exactly $i$ occurrences of a given string
$d_1\dots d_k$.  For $i=0$, and all $k\geq1$, they were considered by
Schmelzer and Baillie \cite{schmelzerbaillie} who gave a numerical algorithm.
For $i>0$ no such algorithm was known, as stressed in \cite[Section
12]{bailliearxiv}, except for $k=1$, then the $K_i$'s are special cases of
Irwin series \cite{irwin}, for which Baillie gives in \cite{bailliearxiv} an
algorithm with implementation.  Here we consider two-digits strings
($k=2$) and obtain efficient exact formulas for the $i=0$
and $i=1$ cases, extending on our earlier work
\cite{burnolkempner,burnolirwin} which treated in a novel manner all
$i$'s for $k=1$.

The convergence of the $K_1$ series is (almost always) extremely slow: one can
estimate (from equations \eqref{eq:Zdd}, \eqref{eq:Zde}) that after having
added the contribution of all integers with at most $\ell$ digits (in base
$b$), the next block of integers with $\ell+1$ digits will (asymptotically)
reduce the distance to the limit by only about one part in $b^2$.  This is
only a heuristic based on the exact count of kept integers of a given length,
and the heuristic result actually is a bit better for $\alpha\neq\beta$ and a
bit worse for $\alpha=\beta$: for $b=2$ one expects that integrating the
$(l+1)$-block multiplies the remaining error by about $1/2$ in the
$\alpha\neq\beta$ case and by $(1+\sqrt5)/4\approx\np{0.809}$ in the
$\alpha=\beta$ case.

For $b=10$ for example, this means that evaluating (not even precisely!) by
``brute force'' is impossible.  For $b=2$ there are four cases $00$, $01$,
$10$, and $11$.  The $01$ and $10$ cases are, as will be discussed next,
easily accessible to direct computation.  Not so for $00$ and $11$: from the
$\approx\np{0.809}$ ratio we expect to have to extend the summation range
$2^{11}$-fold to gain only one decimal figure.  We computed the partial
sums with integers having up to 36 binary digits, they are in both cases at less
than $\np{0.02}$ from the full $K_1$'s.  In itself this required some
combinatorics developing further what will be touched upon in the first
section of this article.
\begin{itemize}
\item for $b=2$, $\alpha\beta=00$, the $K_1$ sum will round to either
  $\np{2.76}$ or $\np{2.77}$.  This is obtained from summing the first
  $\np{355817324}$ terms and adding a correction.
\item for $b=2$, $\alpha\beta=11$, the $K_1$ sum will round to either
  $\np{2.93}$ or $\np{2.94}$.  This is obtained from summing the first
  $\np{237387960}$ terms and adding a correction.
\end{itemize}
This meager data is \emph{all that we have} to gain confidence in any
numerical implementation of Theorem \ref{thm:K1dd}!  Here is what it says, via
the author \textsc{SageMath} code (values are rounded):
\begin{itemize}
\item $b=2$, $\alpha\beta=00$: $K_1\approx\np{2.7633212517890266334181008765657}$.\hfill
\item $b=2$, $\alpha\beta=11$: $K_1\approx\np{2.9384134076501894515174038829017}$.\hfill
\end{itemize}
Hundreds, even thousands of digits can be obtained.  But we have nothing
pre-existing to compare to, already for the third decimal!

Still for base $2$: for $\alpha\beta=10$, one convinces oneself easily that
\[
K_1 = \sum_{n\geq2}\sum_{0\leq j_1<j_2<n}\frac1{2^n-2^{j_2}+2^{j_1}-1}
\]
This can be evaluated numerically directly and has value about
$\np{3.013662769896}$.
Here is what Theorem \ref{thm:K1} says in that case:
\begin{equation*}
K_1 = 4(\frac12+\frac13)
  +\sum_{m=1}^\infty (-1)^m v_m \left(\frac{1}{3^{m+1}}-\frac1{6^{m+1}}\right)
  + \sum_{m=1}^\infty (-1)^m u_m\left(\frac1{2^{m+1}} + \frac1{6^{m+1}}\right)
\end{equation*}
where $(u_m)$ and $(v_m)$ start with $u_0=v_0=4$ and obey the following
recurrences:
\begin{align*}
(2^{m+1}-2+2^{-m-1}) u_m &= \sum_{j=1}^{m}\binom{m}{j}(1 - 2^{-m-1+j})u_{m-j}\\
(2^{m+1}-2+2^{-m-1}) v_m &=
\begin{aligned}[t]
  &\sum_{j=1}^{m}\binom{m}{j} \Bigl((1 - 2^{-m-1+j})v_{m-j} + 2^{-m-1+j}
  u_{m-j}\Bigr) \\&+ 2^{-m-1}u_m
\end{aligned}
\end{align*}
We checked that this formula is numerically correct up to hundreds
of digits.

For $\alpha=01$, the series can be written as:
\[
K_1 = 2 \sum_{n\geq3}\sum_{0< j_1<j_2<n}\frac1{2^n-2^{j_2}+2^{j_1}-1}\approx\np{2.813935234961}
\]
We checked to hundreds of decimal digits that Theorem \ref{thm:K1}
does compute this correctly.

With the results of this paper, we switch from not knowing a single decimal
figure to being able to do statistics.  A simple heuristics from Baillie
\cite[Section 12]{bailliearxiv} suggests $b^2\log(b)$ as reasonable estimate.
From the above results, we see it is only about ok for $b=2$ but when $b$
increases it becomes more and more accurate.  Indeed, for $b=10$, the maximal
distance from $100\log(10)$ to the one hundred $K_1$'s is about $\np{0.026}$.
It is even less than $\np{0.0012}$ if $\alpha\neq\beta$.  Here is for example
$K_1$ for $b=10$, $\alpha\beta=42$ (the values are rounded):
\begin{verbatim}
   K1 = 230.25882 13214 33508 40478 77627 59267 85873 95858 57341
57966 44057 49270 12717 79357 21101 87579 14837 04726 00875 18443
\end{verbatim}
And here are a few more
\begin{verbatim}
K1(10;"35") = 230.25886 98636 06045 19996 74060 14171 11117 72617
37929 58025 46555 85173 41436 96256 50505 60878 23149 34317 80138
K1(10;"00") = 230.25778 86509 07954 96301 56932 54264 57777 94887
61390 55856 69063 45994 61450 04414 55054 51076 76032 63035 62661
K1(10;"99") = 230.25941 83393 23881 16119 42823 97032 86550 26076
61541 81786 98122 41289 02370 29019 23693 24903 93542 26528 80528
\end{verbatim}
Compare with $100\log(10)\approx\np{230.2585092994}$.  For single-digit
strings (not two-digits as considered here), Fahri theorem \cite{fahri} says
that $K_1 > b \log(b)$.  We see here that $K_1 > b^2 \log(b)$ does not always
hold: for $b=10$, there are four $K_1$ values below $100\log(10)$, those for
$\alpha=\beta\in\{0,1,2,3\}$.

The numerical evaluations mentioned here are based on theoretically
\emph{exact} formulas.  As a further example of such formulas consider the
case $b=2$, $\alpha=1$, $\beta=0$ for which $K_0$ is the Erdös-Borwein
constant $E = \sum_{n=1}^\infty(2^n-1)^{-1}$.  Its numerical value is
$\approx{\np{1.60669515241529}}$.  Using the same sequence $(u_m)$ which
intervened in the computation of $K_1$ for $b=2$ and
$\alpha\beta=10$, Theorem \ref{thm:K0} says:
\begin{equation*}
  E  = 1 + 4(\frac13-\frac16) + 
                          \sum_{m=1}^\infty(-1)^m u_m(\frac1{3^{m+1}}-\frac1{6^{m+1}})
\end{equation*}
We checked the numerical validity up to one thousand digits.  In this case
using the defining series is more efficient (for large $m$) due to the
quadratic cost in evaluating the sequence $(u_m)$.  Besides, there is the very
hard to beat formula
\begin{equation*}
E = \sum_{n=1}^\infty 2^{-n^2}\frac{1+2^{-n}}{1-2^{-n}}
\end{equation*}
(Clausen identity  \cite[exercise 10 on page 98]{borweinborwein} with
$q=\frac12$).  It would be
wonderful if such quickly converging series could emerge more generally in
this area.

Let's now detail the contents of the paper.  The value predicted as
``reasonable'' by the Baillie heuristic argument relies on the knowledge of
how many integers of a given length verify the condition imposed upon them.
This combinatorial exercice is solved here in the first section, which after
defining measures on \emph{strings} needed to make sure they had a finite
total mass.  The Remark \ref{rem:gf} and the text which follows it give some
details on how this is relevant to the heuristic estimate.  The combinatorics
underlying the determination of the mass generating functions gets amplified
throughout the paper.  It gets converted into integral identities, after
having pushed the measures from strings to $b$-imal numbers in the half-open
unit interval $[0,1)$.  As in \cite{burnolkempner} and \cite{burnolirwin} the
starting points are some ``log-like'' expressions:
\[
K_0 = \int_{[b^{-1},1)}\frac{d\mu(x)}{x}\qquad K_1 = \int_{[b^{-1},1)}\frac{d\nu(x)}{x}
\]
The combinatorial properties allow to reformulate these integrals as sums of
integrals on the full half-open unit interval, which are actually (up to sign)
the Stieltjes transforms evaluated at certain negative integers.  There is a
subtlety here that in the $\alpha=\beta$ case we need to switch attention to
two other measures $\sigma$ and $\tau$.

The Stieltjes transforms obey recursive equations which would allow, up to
some proliferation of terms, to express $K_0$ and $K_1$ as geometrically
converging series with arbitrarily small ratios.  We content ourself to do
only two steps and to reach ``level $2$'' (the above log-like formulas being
``level $0$'').  Our concluding Theorems \ref{thm:K0}, \ref{thm:K1}, \ref{thm:K0dd}
and \ref{thm:K1dd} say that the numerical evaluation of the $K_0$ and $K_1$
sums is possible via combining the measure moments with finite sums of
reciprocal powers of certain integers having from $2$ to $4$ digits in base
$b$ (only up to $3$ in the $K_0$ case). It is possible to compute numerically
the moments because they verify a linear recurrence which is another
manifestation of the underlying pervading combinatorics.

The multi-precision values given in this introduction were obtained from a
\textsc{SageMath} script which, together with a variant using \textsc{Python}
double-precision floats, is available from the author.  With it the $K_0$ and
$K_1$ sums can be computed to thousands of digits (small bases $b$ require more
terms and would benefit from going to ``level $3$'').  Convergence ratio is
never worse than $1/b\leq 1/2$; ``level $3$'' formulas would achieve at least
$1/b^2$ geometric convergence but have more contributions.


\section{Strings}

As in \cite{burnolirwin} we call $\sD = \{0,\dots,b-1\}$ the set of ``digits''.

The space of \emph{strings} $\boldsymbol{\sD}$ is defined to be the union of
all cartesian products $\sD^l$ for $l\geq0$.  Notice that for $l=0$ one has
$\sD^0=\{\emptyset\}$ i.e. the set with one element.  We will call this
special element the \emph{none-string}.  The length $|X|=l$ of a string $X$ is
the integer $l$ such that $X\in\sD^l$.  We  may write $\bsD_l$ in
place of $\sD^l$.

There is a map from strings to
integers which to $X=(d_{l},\dots ,d_1)$ assigns $n(X) = d_{l} b^{l} + \dots +
d_1$ (and which sends the none-string to zero).  This map is many-to-one, and
each integer $n$ has a unique minimal length representation $X(n)$ which is
called \emph{the} $b$-representation of $n$.  The other representations differ
from this one only via leading zeroes.  Notice that the none-string and
strings with only zeroes all are representations of $0$, and $X(0)$ is the
none-string, not the one-digit string ``$0$''.

The length of an integer $n$ is $l(n) = |X(n)|$. Thus, $l(n)$ is the smallest
non-negative integer such that $n<b^{l(n)}$. Beware that $l(0)=0$, not $1$.

For any string of positive length we let $\sLD(X)$ be its leading digit, as
an element of $\sD$. For any digit $d$ we let $\bsD[d]$ be the subset of
$\bsD$ of strings having the digit $d$ as leading digit.  We may also use the
notation $\bsD[d]_l$ for $\bsD[d]\cap \bsD_l$.

We now let $\cX\subset \bsD$ be the set of strings not containing
$\alpha\beta$ as substring.  The notations $\cX[d]$, $\cX_l$ and $\cX[d]_l$
are self-explanatory.  We may also use $\cX[\rnot d]$ to mean the complement
in $\cX$ of $\cX[d]$.  This is almost the same as the union of the
$\cX[e]$ for $e\in\sD\setminus\{d\}$ except that it contains the none-string.
Other notations were considered such as striking through $d$ but it was feared
they could cause issues in HTML conversion.

We let $\cY\subset \bsD$ be the set of strings containing exactly one
occurrence of $\alpha\beta$.   The notations $\cY[d]$, $\cY_l$ and $\cY[d]_l$,
and others, 
are self-explanatory.

The \emph{pruning} map $\tau:\bsD\setminus \sD^0\to\bsD$ is the process
of removing the leading digit of a string.  Any string $X$ has exactly $b$
antecedents in the full string space $\bsD$ under $\tau$.  Their set is the
\emph{fiber} over $X$.  We may use fiber to refer to intersection with some
subsets such as $\cX$ or $\cY$, this should be clear from context.

\section{Measures and generating functions of masses}

We define a measure $\mu$ on $\bsD$ (equipped with the full $\sigma$-algebra
of all subsets) by assigning the weight $b^{-|X|}$ to any string $X$ in $\cX$,
and the zero weight to all other strings.  We shall prove later that this is a
finite measure having $b^2$ as its total mass.  For this we will use a
disjoint decomposition
\begin{equation}
  \mu = \delta_{\emptyset}+ \sum_{d\in\sD} \muld{d}
\end{equation}
where $\muld{d}$ is defined exactly as $\mu$ except that its support is
restricted to those strings in $\cX$ having $d$ as leading digit.  Notice that
we needed the  Dirac at the none-string for the above formula to be valid.  We
shall use the notation $\muld{\rnot{d}}$ to refer to the measure which has the
same weights as $\mu$, but only on $\cX[\rnot{d}]$.  That is:
\begin{equation}
  \muld{\rnot{d}} = \delta_{\emptyset}+ \sum_{e\neq d} \muld{e} = \mu - \muld{d}
\end{equation}

We define a measure $\nu$ on $\bsD$ by assigning the weight $b^{-|Y|}$ to any
string $Y$ in $\cY$, and the zero weight to all other strings.  We shall prove
later that $\nu$ is a finite measure, also having $b^2$ as its total mass.
We shall use a disjoint decomposition
\begin{equation}
  \nu = \sum_{d\in\sD} \nuld{d}
\end{equation}
where $\nuld{d}$ is defined exactly as $\mu$ except that its support is
restricted to those strings in $\cY$ having $d$ as leading digit.  Here the
none-string does not intervene.  We also use the notation $\nuld{\rnot{d}}$
which here means:
\begin{equation}
  \nuld{\rnot{d}} =  \sum_{e\neq d} \nuld{e} = \nu - \nuld{d}
\end{equation}

The next lemma is the main combinatorial fact which is the key to our solution
to the Baillie problem, in combination with techniques we introduced in our
earlier papers \cite{burnolkempner}, \cite{burnolirwin}.
\begin{lemma}\label{lem:main}
  The push-forward measures $\tau_*(\muld{d})$, $\tau_*(\nuld{d})$, $d\in\sD$ verify:
  \begin{align*}
    \tau_*(\muld{d}) &=\frac1b
    \begin{cases}
      \mu&(d\neq \alpha)\\
      \muld{\rnot\beta}&(d=\alpha)
    \end{cases}\\
    \tau_*(\nuld{d}) &= \frac1b
    \begin{cases}
      \nu&(d\neq \alpha)\\
      \nuld{\rnot\beta} + \muld{\beta} &(d=\alpha)
    \end{cases}
  \end{align*}
\end{lemma}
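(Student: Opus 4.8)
The plan is to verify each claimed identity by comparing the two sides on an arbitrary singleton $\{Z\}$, $Z\in\bsD$: since every measure occurring in the statement is carried by the countable set $\bsD$, agreement on singletons \emph{is} equality of measures. Throughout, write $d\cdot Z$ for the string obtained by prepending the digit $d$ in front of $Z$. The first step is to observe that the $\tau$-fiber over $Z$ is exactly $\{e\cdot Z : e\in\sD\}$, so that a measure of the form $\muld{d}$ or $\nuld{d}$ — whose support consists of strings with leading digit $d$ — charges only the member $d\cdot Z$ of this fiber. Hence $(\tau_*\muld{d})(\{Z\})=\muld{d}(\{d\cdot Z\})$, which equals $\tfrac1b b^{-|Z|}$ when $d\cdot Z\in\cX$ and is $0$ otherwise; similarly $(\tau_*\nuld{d})(\{Z\})$ equals $\tfrac1b b^{-|Z|}$ when $d\cdot Z\in\cY$ and is $0$ otherwise. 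Note that for $Z=\emptyset$ the string $d\cdot Z$ is the one-digit string ``$d$'', which always lies in $\cX$ and never in $\cY$.

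The combinatorial heart, which I would isolate as a small sublemma, is that prepending $d$ in front of $Z$ creates exactly one new pair of adjacent digits, namely the pair $(d,\sLD(Z))$ in the two most significant slots, and creates none at all when $Z=\emptyset$; every other adjacent pair of $d\cdot Z$ is an adjacent pair already occurring in $Z$. Consequently the number of occurrences of $\alpha\beta$ in $d\cdot Z$ equals the number in $Z$, increased by $1$ precisely when $d=\alpha$ and $\sLD(Z)=\beta$ — and this count is still correct in the coincidence case $\alpha=\beta$, since the lone new front pair cannot overlap any occurrence sitting inside $Z$. From this I read off three alternatives: if $d\neq\alpha$, then $d\cdot Z\in\cX\Leftrightarrow Z\in\cX$ and $d\cdot Z\in\cY\Leftrightarrow Z\in\cY$; if $d=\alpha$ and $\sLD(Z)=\beta$, then $d\cdot Z\notin\cX$ while $d\cdot Z\in\cY\Leftrightarrow Z\in\cX$; and if $d=\alpha$ with $\sLD(Z)\neq\beta$ (including $Z=\emptyset$, where $(\alpha)\in\cX$), then again $d\cdot Z\in\cX\Leftrightarrow Z\in\cX$ and $d\cdot Z\in\cY\Leftrightarrow Z\in\cY$.

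Combining the two steps finishes each identity by a short case check. For $d\neq\alpha$ one gets at once $\tau_*(\muld{d})=\tfrac1b\mu$ and $\tau_*(\nuld{d})=\tfrac1b\nu$. For $d=\alpha$, the push-forward $\tau_*(\muld\alpha)$ is $\tfrac1b$ times: the Dirac at the none-string (the image of $(\alpha)\mapsto\emptyset$), together with, over each nonempty $Z\in\cX$ with $\sLD(Z)\neq\beta$, the mass $b^{-|Z|}=\muld{\sLD(Z)}(\{Z\})$, and nothing over any $Z$ with $\sLD(Z)=\beta$; in view of $\muld{\rnot\beta}=\delta_\emptyset+\sum_{e\neq\beta}\muld{e}$ this is precisely $\tfrac1b\muld{\rnot\beta}$. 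Likewise $\tau_*(\nuld\alpha)$ is $\tfrac1b$ times: over each $Z\in\cX$ with $\sLD(Z)=\beta$ the mass $b^{-|Z|}=\muld\beta(\{Z\})$ (the new front pair being the unique $\alpha\beta$), plus over each $Z\in\cY$ with $\sLD(Z)\neq\beta$ the mass $\nuld{\sLD(Z)}(\{Z\})$, and nothing over the none-string; since $\muld\beta$ is carried by $\cX[\beta]$ and $\sum_{e\neq\beta}\nuld{e}=\nuld{\rnot\beta}$ is carried by the disjoint set $\cY[\rnot\beta]$, this is $\tfrac1b(\muld\beta+\nuld{\rnot\beta})$.

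I do not expect a genuine obstacle here; the argument is bookkeeping organised around the ``exactly one new adjacent pair'' principle. The two places that need care are the none-string and the coincidence $\alpha=\beta$. For the former, one must check that the Dirac $\delta_\emptyset$ built into $\muld{\rnot\beta}$ is accounted for exactly by the one-digit string $(\alpha)\in\cX[\alpha]$, whose $\tau$-image is $\emptyset$ with mass $\tfrac1b$, and that no none-string mass is created on the $\nu$-side — which is consistent with $\cY$ containing no string of length below $2$. For the latter, one must make sure occurrences of $\alpha\beta$ are counted consistently even when they could in principle overlap; the sublemma above already handles this, because prepending merely shifts the positions of the adjacent pairs of $Z$ and adds the single front pair, with no overlap possible between the two.
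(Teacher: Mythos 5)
Your proposal is correct and follows essentially the same route as the paper: both arguments reduce each identity to the observation that the only member of the fiber $\tau^{-1}(\{Z\})$ that a leading-digit-$d$ measure can charge is $d\cdot Z$, and then track how prepending $d$ changes the occurrence count of $\alpha\beta$, with the same care at the none-string. Your explicit ``exactly one new adjacent pair, no overlap even when $\alpha=\beta$'' sublemma is a slightly more systematic packaging of what the paper's case discussion does implicitly, but it is not a different proof.
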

\begin{remark}
  In defining (in the expected way) the push-forward of a measure under the
  pruning map, we first restrict the measure to the complement of $\sD^0$,
  because the pruning map $\tau$ is not defined on $\sD^0=\{\textit{the
    none-string}\}$.  So we must be careful that if the measure assigns some
  mass to the none-string, the total mass of the push-forward will be lower
  than the original total mass.  But it will always be true that the mass of
  any $\sD^l$ for $l\geq1$ is the same as the mass given to $\sD^{l-1}$ by the
  push-forward measure.
\end{remark}
\begin{proof}
  Consider the strings $X\in\cX[d]_{l+1}$ with $d\neq\alpha$.  Pruning the
  first digit, we certainly get a string in $\cX_l$, i.e.\@ a string of length
  $l$ not containing the string $\alpha\beta$.  There is only one antecedent
  in the support of $\cX[d]$.  If $d=\alpha$, pruning the first digit also
  gives us a string in $\cX_l$ but its leading digit will not be $\beta$.

  Conversely suppose that $X'$ is a given string of length $l$ and
  not containing the string $\alpha\beta$.  If $d\neq\alpha$ there is one and
  exactly one string $X$ in the support of $\muld{d}$ whose pruning in $X'$.
  If $d=\alpha$ also, but under the condition that $X'$ does not have
  $\beta$ as leading digit (thus $X'$ can be the none-string).

  Let's check that we did not make a reasoning mistake for $X'$ the
  none-string.  It has indeed exactly one antecedent in the support of
  $\muld{d}$, it is the one-digit string $d$.  This works whether or not
  $d=\alpha$. So the formula in the Lemma is true as our definition of
  $\muld{\rnot\beta}$ was the restriction of $\mu$ to the set of strings not
  having $\beta$ as leading digit.  This includes the none-string.  If we had
  defined $\muld{\rnot\beta}$ as the sum of the $\muld{e}$ for $e\neq \beta$ we
  would have had to add an explicit $\delta_{\emptyset}$ somewhere.

  Let now $X$ be a string in $\cY[d]_{l+1}$.  Suppose first
  $d\neq\alpha$. Pruning $d$ we get a string which still has exactly one
  occurrence of $\alpha\beta$.  Conversely if we have such a string $X'$ we
  can add $d$ as leading digit and it gives an $X$ containing exactly one
  $\alpha\beta$.
  Suppose now $d=\alpha$.  Then pruning it we either get a string starting
  with $\beta$ and not containing any occurrence of $\alpha\beta$ or we get a
  string not starting with $\beta$ and containing exactly one occurrence of
  $\alpha\beta$.  Conversely if such a string $X'$ is given then $\alpha X'$
  will contain exactly one occurrence of $\alpha\beta$.

  The proof is complete.
\end{proof}

Define for any $l\in\NN$ (throughout $\NN=\ZZ_{\geq0}$), $M(l,d)$ to be
$\mu(\bsD[d]_l)$ and $N(l,d)$ to be $\nu(\bsD[d]_l)$.  We have $M(0,d) = 0 =
N(0,d)$ for all $d\in\sD$.  We get from the previous Lemma, for $l\geq0$ (the
parenthesized sums are $\mu(\bsD_l)$ and $\mu(\bsD_l)-\mu_\beta(\bsD_l)$
respectively):
\begin{equation}
  M(l+1,d) = \tau_*(\muld{d})(\bsD_l) =
  \begin{cases}
    \frac1b \left(\sum_{e\in\sD} M(l,e) + \delta_{l=0} (l)\right)&(d\neq\alpha)\\
    \frac1b \left(\sum_{e\in\sD, e\neq \beta} M(l,e) + \delta_{l=0}(l)\right) &(d=\alpha)
  \end{cases}
\end{equation}
Let us now consider the generating series $W_d = \sum_{l\geq0} M(l,d)T^l$ If
$d\neq\alpha$ we obtain (as there is no constant term):
\begin{equation}
  W_d = \sum_{l\geq0} M(l+1,d)T^{l+1} = \frac Tb\left(\sum_{e\in\sD} W_e + 1\right)
\end{equation}
Let $W = 1 + \sum_{e\in\sD} W_e = \sum_{l\geq0} \mu(\sD^l)T^l$.  We thus get
that all $W_d$ for $d\neq \alpha$ are equal to $Tb^{-1} W$.  For $d=\alpha$ we
get:
\begin{equation}\label{eq:walpha1}
  W_\alpha = \sum_{l\geq0} M(l+1,\alpha)T^{l+1}=\frac
  Tb\left(\sum_{e\in\sD\setminus{\beta}} W_e + 1\right) = \frac Tb(W - W_\beta)
\end{equation}
This implies from $W = 1 + \sum_e W_e$:
\begin{equation}
  W = 1 + (b-1)Tb^{-1} W  + Tb^{-1}(W - W_\beta) = 1 + TW - b^{-1}T W_\beta
\end{equation}
hence
\begin{equation}\label{eq:wbeta}
  W_\beta = b \frac{1 - W + TW}{T}
\end{equation}
and then from \eqref{eq:walpha1}:
\begin{equation}\label{eq:walpha2}
  W_\alpha = W - 1 - (1 - b^{-1})TW
\end{equation}
Consider first $\alpha=\beta$.  Combining the last two equations
\eqref{eq:wbeta} and \eqref{eq:walpha2} we get
$
   T W- T - (1 - b^{-1})T^2 W = b - bW + bTW
$
and
\begin{equation}\label{eq:Wdd}
  \begin{aligned}
    W &= \frac{b(b+T)}{b^2-b(b-1)T- (b-1)T^2}\\
    W_\alpha &= \frac{b T}{b^2-b(b-1)T- (b-1)T^2}\\
    W_d &= \frac{(b+T)T}{b^2-b(b-1)T- (b-1)T^2}\mathrlap{\quad(d\neq\alpha)}
  \end{aligned}
\end{equation}
Suppose now that $\alpha\neq\beta$.  In that case we must have $W_\beta = T b
^{-1} W$ and combining with \eqref{eq:wbeta} we obtain:
\begin{equation}\label{eq:Wde}
  \begin{aligned}
    W &= \frac{b^2}{b^2-b^2T + T^2}\\
    W_\alpha &= \frac{(b-T)T}{b^2-b^2T+T^2}\\
    W_d &= \frac{bT}{b^2-b^2T + T^2}\mathrlap{\quad(d\neq\alpha)}
  \end{aligned}
\end{equation}
So we know the total mass:
\begin{proposition}\label{prop:mumass}
  The measure $\mu$ has a finite total mass.  If $\alpha=\beta$ the
  generating functions for the masses of the $\sD^l$ are given by
  \eqref{eq:Wdd} and as corollary:
\begin{equation}
   \mu(\bsD) = b(b+1)\qquad \mu(\bsD[d])=
   \begin{cases}
     b& (d=\alpha)\\
     b+1&(d\neq\alpha)
   \end{cases}
\end{equation}
  If $\alpha\neq\beta$, the generating functions are given by \eqref{eq:Wde}
  and:
  \begin{equation}
   \mu(\bsD) = \mathrlap{b^2}\phantom{b(b+1)}\qquad \mu(\bsD[d])=
   \begin{cases}
     b-1& (d=\alpha)\\
     b &(d\neq\alpha)
   \end{cases}
  \end{equation}
\end{proposition}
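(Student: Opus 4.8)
The plan is to exploit the fact that equations \eqref{eq:Wdd} and \eqref{eq:Wde} already express the generating series $W$, $W_\alpha$ and $W_d$ (for $d\neq\alpha$) as explicit rational functions of $T$, and then to read off all the masses by specialising at $T=1$. The first point to pin down is that these are honest equalities of \emph{formal} power series, not merely the outcome of a formal manipulation: each coefficient $M(l,d)=\mu(\bsD[d]_l)$ is a finite non-negative real, since the number of length-$l$ strings in $\cX$ is at most $b^l$ and each carries weight $b^{-l}$, so $\mu(\sD^l)\leq1$; hence $W_d$ and $W=1+\sum_e W_e$ are well-defined elements of $\RR[[T]]$ with non-negative coefficients, the recurrences deduced from Lemma \ref{lem:main} hold coefficientwise, and the linear system solved just above the Proposition is a system of identities in $\RR[[T]]$. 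Consequently $W$, $W_\alpha$, $W_d$ are the Taylor expansions at $0$ of the displayed rational functions.

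Next I would check that the denominator of \eqref{eq:Wdd}, respectively of \eqref{eq:Wde}, has no zero in the closed unit disk, so that each of these rational functions is analytic on a disk of radius strictly larger than $1$. For $\alpha=\beta$ the denominator $b^2-b(b-1)T-(b-1)T^2$ is, as a function of $T$, a downward-opening parabola taking the positive values $b^2$, $1$ and $(b-1)^2+b^2$ at $T=0,1,-1$; being positive at these three points it is positive on all of $[-1,1]$, so both its (real) roots lie strictly outside $[-1,1]$. For $\alpha\neq\beta$ the denominator $T^2-b^2T+b^2$ has constant term $b^2$, value $1$ at $T=1$, and discriminant $b^2(b^2-4)$, so its roots are either complex conjugates of modulus $b>1$ or real and larger than $1$ (the inequality $b^2-2>b\sqrt{b^2-4}$ reduces to $4>0$). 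In every case the radius of convergence of $W$ and of each $W_d$ exceeds $1$.

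It then follows that the non-negative series $W(T)=\sum_{l\geq0}\mu(\sD^l)T^l$ converges at $T=1$, so $\mu(\bsD)=\sum_{l\geq0}\mu(\sD^l)=W(1)<\infty$, which gives the finiteness assertion, and similarly $\mu(\bsD[d])=W_d(1)$. Finally I substitute $T=1$ into \eqref{eq:Wdd} and \eqref{eq:Wde}: in both cases the denominator becomes $1$, yielding $\mu(\bsD)=b(b+1)$, $\mu(\bsD[\alpha])=b$, $\mu(\bsD[d])=b+1$ ($d\neq\alpha$) when $\alpha=\beta$, and $\mu(\bsD)=b^2$, $\mu(\bsD[\alpha])=b-1$, $\mu(\bsD[d])=b$ ($d\neq\alpha$) when $\alpha\neq\beta$; these can be cross-checked against $\mu(\bsD)=1+\sum_{d\in\sD}\mu(\bsD[d])$, the extra $1$ being the Dirac mass at the none-string. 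The only step requiring genuine thought is the root-location argument ensuring the radius of convergence is $>1$; once that is in place the rest is the substitution $T=1$ and bookkeeping.
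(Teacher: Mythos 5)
Your proof is correct and follows essentially the same route as the paper, whose entire proof is to evaluate the rational generating functions \eqref{eq:Wdd} and \eqref{eq:Wde} at $T=1$. The only difference is that you make explicit the coefficient bound $\mu(\sD^l)\leq 1$ and the root-location argument showing the denominators do not vanish on the closed unit disk, which justifies the convergence at $T=1$ (and hence the finiteness of the total mass) that the paper's one-line proof leaves implicit.
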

\begin{proof}
  The values given are obtained via $T=1$ in the respective generating series.
\end{proof}

We now compute the generating series for the $N(l,d) = \nu(\bsD[d]_l)$,
$l\geq0$. They each start with a multiple of $T^2$.  Again
using Lemma \ref{lem:main} we have for $l\geq0$:
\begin{equation}
  N(l+1,d) = \tau_*(\nuld{d})(\bsD_l) =
  \begin{cases}
    \frac1b \sum_{e\in\sD} N(l,e)&(d\neq\alpha)\\
    \frac1b \left(\sum_{e\in\sD\setminus{\beta}} N(l,e) + M(l,\beta)\right) &(d=\alpha)
  \end{cases}
\end{equation}
Notice for example that for $l=1$ we correctly obtain a
$b^{-2}$ from the one-digit string $\beta$ which is image under pruning of
$\alpha\beta$ hence receives via the push-forward its $b^{-2}$ weight.
Let us now consider the generating series $Z_d = \sum_{l\geq1} N(l,d)T^l$.  If
$d\neq\alpha$ we obtain
\begin{equation}
  Z_d = \sum_{l\geq0} N(l+1,d)T^{l+1} = \frac Tb\sum_{e\in\sD} Z_e
\end{equation}
Let $Z = \sum_{e\in\sD} Z_e = \sum_{l\geq0} \nu(\sD^l)T^l$.    We obtain
that all $Z_d$ for $d\neq \alpha$ are equal to $Tb^{-1} Z$.  For $d=\alpha$ we
get:
\begin{equation}\label{eq:zalpha1}
  Z_\alpha = \sum_{l\geq0} N(l+1,\alpha)T^{l+1}=\frac
  Tb\left(\sum_{e\in\sD\setminus{\beta}} Z_e + W_\beta\right) = \frac Tb(Z -
  Z_\beta + W_\beta)
\end{equation}
Combining this with $Z_d =Tb^{-1} Z$ for $d\neq\alpha$ we get from $Z = \sum_e
Z_e$:
\begin{equation}
  Z = (1 - b^{-1})T Z  + \frac Tb(Z - Z_\beta + W_\beta) = 
   TZ - b^{-1} T Z_\beta + b^{-1} T W_\beta
\end{equation}
hence
\begin{equation}
  \label{eq:zbeta}
  Z_\beta = W_\beta + b (T-1)\frac ZT
\end{equation}
and then from \eqref{eq:zalpha1}:
\begin{equation}\label{eq:zalpha2}
  Z_\alpha = b^{-1}T(Z -  Z_\beta + W_\beta) = (1 - (1 - b^{-1})T)Z 
\end{equation}
Coonsider first $\alpha=\beta$.  Combining the last two equations
\eqref{eq:zbeta} and \eqref{eq:zalpha2} and
\eqref{eq:Wdd} for $W_\beta = W_\alpha$ we get then $Z$.  It turns out to be
$W_\beta^2$! We then also get $Z_\alpha$ and $Z_d$ for $d\neq\alpha$:
\begin{equation}
  \label{eq:Zdd}
  \begin{aligned}
    Z &= \frac{b^2 T^2}{(b^2-b(b-1)T- (b-1)T^2)^2}\\
    Z_\alpha &= \frac{(b^2 - b(b-1)T)T^2}{(b^2-b(b-1)T- (b-1)T^2)^2}\\
    Z_d &= \frac{b T^3}{(b^2-b(b-1)T- (b-1)T^2)^2}\mathrlap{\quad(d\neq\alpha)}
   \end{aligned}
\end{equation}

Suppose now that $\alpha\neq\beta$.  In that case we must have $Z_\beta = T b
^{-1} Z$ and combining with \eqref{eq:zbeta} we obtain:
\begin{equation}
  Z = \frac{bT W_\beta}{b^2 - b^2T + T^2} = W_\beta^2
\end{equation}
so again $Z = W_\beta^2$! A posteriori the combinatorial meaning is obvious:
\[ \underbrace{xxxxxxxx\alpha}_{\text{reverse this}}\beta xxxxxxxx\]
  When reversing the order of digits in the first half we get a string not
  containing $\beta\alpha$ and starting with $\alpha$.  But obviously the
  number of such strings depend only on the length and on whether
  $\alpha=\beta$ or $\alpha\neq \beta$.  This situation does not depend on
  whether we work with $\alpha\beta$ or with $\beta\alpha$.  So we have as
  many possibilities as for strings of the same length, not containing
  $\alpha\beta$ and starting with $\beta$.  So $Z = W_\beta^2$ from Cauchy
  product.

Summarizing this case $\alpha\neq\beta$ we thus have:
\begin{equation}\label{eq:Zde}
  \begin{aligned}
    Z &= \frac{b^2 T^2}{(b^2-b^2T + T^2)^2}\\
    Z_\alpha &= \frac{(b^2-b(b-1)T)T^2}{(b^2-b^2T+T^2)^2}\\
    Z_d &= \frac{b T^3}{(b^2-b^2T + T^2)^2}\mathrlap{\quad(d\neq\alpha)}
  \end{aligned}
\end{equation}
Our main interest here is in the total mass:
\begin{proposition}\label{prop:numass}
  The measure $\nu$ has a finite total mass.  If $\alpha=\beta$ the
  generating functions for the masses of the $\sD^l$ are given by
  \eqref{eq:Zdd} and  if $\alpha\neq\beta$ they are given by \eqref{eq:Zde}.
  In both cases one obtains
  \begin{equation}
   \nu(\bsD) = b^2\qquad \nu(\bsD[d])= b\quad (\forall d \in\sD)
  \end{equation}
\end{proposition}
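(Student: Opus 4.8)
The plan is to read off everything by specializing the generating series at $T=1$, just as in the proof of Proposition \ref{prop:mumass}; the formulas \eqref{eq:Zdd} and \eqref{eq:Zde} themselves have already been derived in the discussion preceding the statement, so only the finiteness and the mass values remain. Since $\bsD[d]=\bigsqcup_{l\geq0}\bsD[d]_l$ is a disjoint decomposition and $\nu$ is a positive measure, countable additivity gives $\nu(\bsD[d])=\sum_{l\geq1}N(l,d)=Z_d(1)$, and likewise $\nu(\bsD)=\sum_{l\geq1}\nu(\bsD_l)=Z(1)$, provided these power series converge at $T=1$. Granting that, substituting $T=1$ into \eqref{eq:Zdd} (case $\alpha=\beta$) or \eqref{eq:Zde} (case $\alpha\neq\beta$) is immediate: in both cases the denominator of every fraction specializes to $1$, so $Z(1)=b^2$, $Z_\alpha(1)=b^2-b(b-1)=b$ and $Z_d(1)=b$ for $d\neq\alpha$, and summing over $d$ recovers $\nu(\bsD)=b^2$.

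The one point that genuinely needs an argument is the legitimacy of setting $T=1$, i.e.\ the convergence of these series at $1$, and I would settle it via the identity $Z=W_\beta^2$ established above, so as to avoid having to locate the poles of the rational functions in \eqref{eq:Zdd} and \eqref{eq:Zde}. The coefficients of $Z$ are the nonnegative numbers $\nu(\bsD_l)$ and those of $W_\beta$ are the nonnegative numbers $M(l,\beta)=\mu(\bsD[\beta]_l)$. By Proposition \ref{prop:mumass} one has $\mu(\bsD[\beta])=b$ regardless of whether $\beta$ equals $\alpha$, hence $\sum_l M(l,\beta)$ converges with sum $W_\beta(1)=b$; since the Cauchy product of two convergent series of nonnegative terms converges to the product of their sums, $Z(1)=\sum_l\nu(\bsD_l)=W_\beta(1)^2=b^2$. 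In particular $\nu$ is finite with $\nu(\bsD)=b^2$.

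It remains to get the individual $\nu(\bsD[d])$, for which I would use the relations obtained along the way rather than the closed forms: for $d\neq\alpha$ one has $Z_d=Tb^{-1}Z$, so $N(l,d)=b^{-1}\nu(\bsD_{l-1})$ and $\nu(\bsD[d])=b^{-1}\nu(\bsD)=b$; then from $\nu(\bsD)=\nu(\bsD[\alpha])+(b-1)b$ one gets $\nu(\bsD[\alpha])=b^2-(b-1)b=b$ as well (equivalently, \eqref{eq:zalpha2} at $T=1$ gives $Z_\alpha(1)=b^{-1}Z(1)=b$ directly). None of the steps is computationally hard; the whole substance sits in the convergence justification, and the identity $Z=W_\beta^2$ combined with Proposition \ref{prop:mumass} makes even that routine.
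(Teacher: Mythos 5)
Your proof is correct and follows the same route as the paper, whose entire proof of this proposition is the one line ``The values given are obtained via $T=1$ in the respective generating series.'' The only substantive addition is your justification that $T=1$ lies in the domain of convergence — via $Z=W_\beta^2$, the value $W_\beta(1)=\mu(\bsD[\beta])=b$ from Proposition \ref{prop:mumass} (valid whether or not $\alpha=\beta$), and the Cauchy product of convergent nonnegative series — which is a correct and genuinely worthwhile point of rigor that the paper leaves implicit (otherwise one must check that the roots of the denominators in \eqref{eq:Zdd} and \eqref{eq:Zde} lie outside the unit disc).
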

\begin{proof}
  The values given are obtained via $T=1$ in the respective generating series.
\end{proof}
\begin{remark}\label{rem:gf}
  Let $b=10$.  Baillie \cite[Section 12]{bailliearxiv} proposes to approximate
  the contribution to $K_1$ from integers with $l$ decimal digits by
  $\log(10)N(l)/(10^l - 10^{l-1})$, because $\log(10)\approx \sum_{l(n)=l}
  1/n$, and here $N(l)$ is the number of length $l$ integers contributing to
  $K_1$.  Boldly applying this estimate already at $l=2$ we get in total
  $\log(10)(1-1/10)^{-1}\sum_{1\leq d<10} Z_d(1) = \log(10)\frac{10}{9}9\times
  10 = 100\log(10)$.

  We can use the same tools to get rigorous but very crude bounds: the
  contribution to $K_1$ by length $l$ integers (for $l\geq2$) is greater than
  $N(l)/10^{l}$ and less than $N(l)/10^{l-1}$.  So $K_1$ is more than
  $\sum_{1\leq d<10} Z_d(1)=90$ and less than ten times that: $90<K_1<900$.

  Note also that if we replace $T$ by $bt$ in the various $Z_d$ we get the generating
  functions counting how many strings of a given length obey the condition
  (here to have one occurrence of $\alpha\beta$), and have a given leading
  digit.  Summing from $d=1$ to $d=b-1$ we then get the generating function
  counting how many \emph{positive integers} of a given length are obeying the
  condition.
\end{remark}

Building up on the previous remark let's explain how we can combine the idea
with exactly computed partial sums.  We get as illustrative case $b=2$,
$\alpha=\beta=0$, as considered in the introduction (which is actually one of
the very few cases amenable to concrete realization for $K_1$ sums...)

Let $n_0=0$ and $n_l$ be the count of numbers of length $l$ (in binary)
positive integers containing exactly one instance of $00$.  So $n_1=0$,
$n_2=0$, $n_3=1$.  According to \eqref{eq:Zdd} one has
\[
g(T) = \sum_{l=1}^\infty n_l2^{-l}T^l = \frac{2T^3}{(4-2T-T^2)^2}
                    = \frac{T^3}{8(1-\frac12T-\frac14T^2)^2} 
\]
The value at $T=1$ (which is in the convergence region) is $2$.  Let's call
$r_0=g(1)$ and more generally $r_l =  \sum_{k>l}^\infty n_k2^{-k}T^k$.  Then
\[
g(T) = \sum_{l=1}^\infty (r_{l-1}-r_l)T^l = r_0 T + r_1(T^2-T)+r_2(T^3-T^2)+\dots
\]
\[
\implies \sum_{l=0}^{\infty} r_l T^l = \frac{r_0 - g(T)}{1 -T} = \frac{\text{some polynom}}{(1-\frac12T-\frac14T^2)^2}
\]
\[
\implies r_l = (a + bl)x^l + (c +dl)y^l\qquad (x+y=\frac12, xy=-\frac14)
\]
Let $x=\frac{1+\sqrt{5}}{4}\approx\np{0.809}$ be the largest one.  So we see
that $r_{l+1}\sim x r_l$ (certainly $(a,b)\neq(0,0)$) which is what we hinted
at earlier.  This means that to obtain one more decimal digit, we need circa
all integers with up to $11$ extra binary digits, i.e. we must multiply our upper
limit of summation by more than $2000$!

Let's suppose we have computed by brute force $\sum' 1/m$ for all admissible
integers up to $N$ binary digits (included), obtaining a partial sum
$S_{N}=S(n<2^{N})$ of $K_1$.  We are certain that $S_{N} + r_{N} < K_1 <
S_{N} + 2 r_{N}$.  Indeed for any integer $m$ of length $k$, $2^{-k} <m^{-1}
\leq 2\cdot 2^{-k}$.

We can compute $r_{N}$ (whose value verified otherwise can serve as
double-check that at least we had the correct number of admissible integers of
each length) by many theoretical or practical means, in particular as $2 -
\sigma_{N}$ where $\sigma_{N}=\sum_{0\leq l\leq N}n_l2^{-l}$ was computed at
the same time we evaluated $S_{N}$ by ``brute force''.  The \cite[Section
12]{bailliearxiv} heuristics suggests to use $S_{N} + 2\log(2) r_{N}$ as
approximation.  In practice we observed that it is not so good heuristics once
$N$ is large enough (but practice is limited to the four feasible cases...).

Note that for the above sketched numerical implementation we only need to know
the total mass to obtain rigorous lower and upper bounds (but we should rather
also count in passing number of terms to compare with the generating series).
For $K_1$ sums this is always $b(b-1)$ as all $Z_d$ independently of whether
$d=\alpha$ or not have the same value $b$ at $T=1$.  The situation is slightly
different for $K_0$ sums and is left to the reader, see Proposition
\ref{prop:mumass} as starting point.

\section{Numbers, functions, and integrals}

Although we shall not make much use of it in the notations, let's define a
function $x:\bsD\to[0,1)$ via the formula
\begin{equation}
  x(X) = \frac{n(X)}{b^{|X|}}
\end{equation}
Recall that $n(X)$ is the integer $d_l b^{l-1} + \dots + d_1$ for $X=d_l\dots
d_1$ and $l=|X|$, and that $n(\emptyset)=0$, so $x(\emptyset)=0$. The image of
$x$ is the set of $b$-imal real numbers in $[0,1)$.  Any function $f$ on
$[0,1)$ can be pull-back to $\bsD$ to define $x^*f(X) = f(x(X))$.  But we will
rather push-forward through $x$ the measures of interest to the half-open
interval $[0,1)$ and consider functions on this interval as in our previous
works \cite{burnolkempner, burnolirwin}.

We use the symbols $\mu$ and $\nu$ also to refer to the push-forward measures
$x_*(\mu)$ and $x_*(\nu)$.  As these are finite discrete measures we can
integrate any bounded function on the interval against them (see
\cite{burnolirwin}).  We have a notational problem here that we also need
integrating against the various $x_*(\muld{d})$, and if we write the measures
as $\muld{d}$ the integration element will be a somewhat unclear
$d\muld{d}(x)$.  So we use rather the notations $\mu_d = x_*(\muld{d})$ and
$\nu_d = x_*(\nuld{d})$ and $\mu_d(\dx)$, $\mu(\dx)$, rather than $d\mu_d(x)$
and $d\mu(x)$.

We translate the counting Lemma \ref{lem:main} into an identity with
integrals:
\begin{lemma}\label{lem:int}
  Let $f$ be any bounded function on $[0,b)$. Then, for
  any digit $d\in\sD$:
  \begin{align}\label{eq:intmud}
      b\int_{[0,1)}f(bx)\mu_d(\dx) &= \int_{[0,1)} f(d+x)(\mu-\delta_{d,\alpha}\mu_\beta)(\dx)\\
\label{eq:intnud}
      b\int_{[0,1)}f(bx)\nu_d(\dx) &= 
   \int_{[0,1)} f(d+x)(\nu-\delta_{d,\alpha}\nu_\beta+\delta_{d,\alpha}\mu_\beta)(\dx)
  \end{align}
\end{lemma}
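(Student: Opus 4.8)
The plan is to deduce Lemma~\ref{lem:int} directly from the counting Lemma~\ref{lem:main} by applying it to the push-forward measures under the pruning map $\tau$, and then tracking how $\tau$ and the numbering map $x$ interact. The key observation is that, for a string $X$ of length $l+1$ with leading digit $d$, we have $n(X) = d\,b^l + n(\tau X)$, hence
\[
  x(X) = \frac{n(X)}{b^{l+1}} = \frac{d + n(\tau X)/b^l}{b} = \frac{d + x(\tau X)}{b}.
\]
Equivalently, $b\,x(X) = d + x(\tau X)$. So if I fix a digit $d$ and consider the measure $\muld{d}$ (supported on strings in $\cX$ with leading digit $d$), pushing it forward first by $\tau$ and then by $x$ computes something about $x(\tau X)$, while pushing it forward by $x$ directly and then multiplying the variable by $b$ and subtracting $d$ gives the same thing. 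The content of the lemma is just making this commutation precise and then inserting the identity from Lemma~\ref{lem:main}.

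Concretely, I would proceed as follows. First, for a bounded $f$ on $[0,b)$, write
\[
  \int_{[0,1)} f(bx)\,\mu_d(\dx) = \int_{\bsD} f(b\,x(X))\,\muld{d}(dX),
\]
using that $\mu_d = x_*(\muld{d})$ and that $\muld{d}$ is supported on strings of length $\geq 1$. On the support, $b\,x(X) = d + x(\tau X)$, so the right side equals $\int_{\bsD} f(d + x(\tau X))\,\muld{d}(dX) = \int_{\bsD} f(d + x(X'))\,(\tau_*\muld{d})(dX')$ by the definition of push-forward (legitimate because $\muld{d}$ gives no mass to the none-string for $d \in \sD$). Now apply Lemma~\ref{lem:main}: $\tau_*\muld{d} = \frac1b\mu$ if $d \neq \alpha$ and $\tau_*\muld{d} = \frac1b\muld{\rnot\beta} = \frac1b(\mu - \mu_\beta)$ if $d = \alpha$; in both cases this is $\frac1b(\mu - \delta_{d,\alpha}\mu_\beta)$. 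Pushing forward by $x$ turns the integral over $\bsD$ into an integral over $[0,1)$ against $\mu - \delta_{d,\alpha}\mu_\beta$, and multiplying through by $b$ yields \eqref{eq:intmud}. The argument for \eqref{eq:intnud} is identical, using instead $\tau_*\nuld{d} = \frac1b\nu$ for $d \neq \alpha$ and $\tau_*\nuld{d} = \frac1b(\nuld{\rnot\beta} + \mu_\beta) = \frac1b(\nu - \nu_\beta + \mu_\beta)$ for $d = \alpha$, which is uniformly $\frac1b(\nu - \delta_{d,\alpha}\nu_\beta + \delta_{d,\alpha}\mu_\beta)$.

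The only genuinely delicate point — and the one I would spell out carefully — is the bookkeeping around the none-string and the domain of $\tau$. The pruning map is undefined on $\sD^0$, so the identity $\tau_*(\text{restriction to }\bsD\setminus\sD^0)$ is what is really meant; since $\muld{d}$ and $\nuld{d}$ already assign zero mass to the none-string (their supports lie in $\cX[d]$ resp.\ $\cY[d]$, which consist of strings of length $\geq 1$, resp.\ $\geq 2$), no mass is lost and the push-forward behaves naively. One should also note that $f$ is assumed bounded on $[0,b)$, and the arguments $bx(X) = d + x(X')$ range over $[d, d+1) \subset [0,b)$, so $f$ is evaluated only where it is defined; all the measures in sight are finite discrete measures (Propositions~\ref{prop:mumass} and \ref{prop:numass}), so every integral converges absolutely and the manipulations of push-forwards under composition $x \circ \tau$ are unproblematic. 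Beyond that, the proof is a routine unwinding of definitions, so I would keep it short.
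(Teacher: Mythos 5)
Your proof is correct and follows essentially the same route as the paper's: both rest on the identity $b\,x(X)=d+x(\tau X)$ for strings with leading digit $d$, invoke Lemma~\ref{lem:main} to identify $\tau_*(\muld{d})$ and $\tau_*(\nuld{d})$, and handle the same none-string/fiber bookkeeping (the paper just phrases the change of variables as an explicit term-by-term matching over fibers of $\tau$ rather than as a push-forward composition). No gaps; your remarks on where mass could be lost and on the range $[d,d+1)\subset[0,b)$ of the arguments of $f$ match the paper's own cautionary remarks.
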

Note that we use the Kronecker delta in these formulas.
\begin{proof}
  On the left hand side of the first formula we have the infinite sum over all
  positive integers $l$ and all strings $X$ of length $l$ having $d$ as
  leading digit of $b f(b \,n(X)/b^l)\muld{d}(\{X\}) = b
  f(b\,x(X))\muld{d}(\{X\})$.  Here $\muld{d}(\{X\})$ is $b^{-|X|}$ if $X\in\cX$
  else it is zero.  We can restrict to $l>0$ because the none-string receives
  no weight from $\muld{d}$ (this does not mean that the real number
  $0$ receives no weight from $\mu_d$!).

  Similarly on the right hand side we have the infinite sum over all positive
  integers $l=l-1+1$ and all strings $Y$ of length $l-1$ (not $l$... we do
  want to take into account the none-string) of $f(d + b^{1-l}n(Y))$ weighted
  by some (non-negative) quantity $(\mu-\delta_{d,\alpha}\mu_\beta)(\{Y\})$.
  According to Lemma \ref{lem:main} the used weight is also
  $b\tau_*(\muld{d})(\{Y\})$.  Now the fiber $\tau^{-1}(\{Y\})$ in $\bsD_{l}$
  consists of the extensions $eY$ of $Y$ by a leading digit $e$.  The measure
  $\muld{d}$ may assign some positive weight to a single one element in this
  fiber, which is $X = dY$.  It may assign it zero weight but what is certain
  is that all other elements in the fiber have zero weight.  The value $f(b\,
  x(X))$ is also $f(d+n(Y)/b^{l-1})$.  So
\[
  bf(b\,x(X))\muld{d}(\{X\}) = f(d+x(Y))(\mu-\delta_{d,\alpha}\mu_\beta)(\{Y\})
\]
Hence the first integral formula.  The second integral formula receives the
analogous proof as corollary to Lemma \ref{lem:main}.
\end{proof}

The case $\alpha=\beta$ is more complex than $\alpha\neq\beta$.  We assume
from now on $\boxed{\alpha\neq\beta}$ and will return to the case of
$\alpha=\beta$ in the final chapter only.
\begin{lemma}\label{lem:intmu}
  Let $f$ be any bounded function on $[0,b)$. Suppose $\alpha\neq\beta$.
Then
  \begin{equation}\label{eq:intmu}
      b\int_{[0,1)}f(bx)\mu(\dx) 
      = b f(0) + \sum_{d\in\sD}\int_{[0,1)} f(d+x)\mu(\dx)
      - \frac1b \int_{[0,1)}f(\alpha+\frac{\beta+ x}b)\mu(\dx)
\end{equation}
And for any bounded function $g$ on $[0,1)$:
\begin{equation}\label{eq:intmubeta}
  \int_{[0,1)} g(x)\mu_\beta(\dx) = \frac1b\int_{[0,1)} g(\frac{\beta + x}b)\mu(\dx)
\end{equation}
  \begin{equation}\label{eq:intmug}
      \int_{[0,1)}g(x)\mu(\dx) 
      = g(0) + \frac1b\sum_{d\in\sD}\int_{[0,1)} g(\frac{d+x}b)\mu(\dx) -
     \frac1{b^2}
       \int_{[0,1)}g(\frac{\alpha b + \beta+ x}{b^2})\mu(\dx)
\end{equation}
\end{lemma}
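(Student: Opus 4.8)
The plan is to derive all three identities from the single-digit identity \eqref{eq:intmud} of Lemma~\ref{lem:int}, together with the disjoint decomposition $\mu = \delta_0 + \sum_{d\in\sD}\mu_d$ obtained by pushing forward $\mu=\delta_{\emptyset}+\sum_d\muld{d}$ through $x$ (the none-string maps to $x=0$, so $x_*\delta_{\emptyset}=\delta_0$). The recurring device is the substitution $f(y)=g(y/b)$, which converts a statement about $f(bx)$ into one about $g(x)$; one only has to keep track of domains, namely that $y\mapsto g(y/b)$ is bounded on $[0,b)$ whenever $g$ is bounded on $[0,1)$, and that $f(d+x)$ is always evaluated at points of $[d,d+1)\subseteq[0,b)$ since $d\leq b-1$.

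First I would prove \eqref{eq:intmubeta}. Apply \eqref{eq:intmud} with the digit $d=\beta$: since we are in the case $\alpha\neq\beta$, the Kronecker term $\delta_{\beta,\alpha}\mu_\beta$ vanishes, so $b\int_{[0,1)} f(bx)\mu_\beta(\dx)=\int_{[0,1)} f(\beta+x)\mu(\dx)$. Specializing $f(y)=g(y/b)$, so that $f(bx)=g(x)$ and $f(\beta+x)=g(\tfrac{\beta+x}{b})$, and dividing by $b$, gives \eqref{eq:intmubeta} directly.

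Next, for \eqref{eq:intmu}, I would expand $b\int_{[0,1)} f(bx)\mu(\dx)$ using $\mu=\delta_0+\sum_d\mu_d$: the Dirac contributes $bf(0)$, and by \eqref{eq:intmud} each remaining term $b\int_{[0,1)} f(bx)\mu_d(\dx)$ equals $\int_{[0,1)} f(d+x)\mu(\dx)-\delta_{d,\alpha}\int_{[0,1)} f(d+x)\mu_\beta(\dx)$. Summing over $d\in\sD$ collapses the correction to the single term $\int_{[0,1)} f(\alpha+x)\mu_\beta(\dx)$, since only $d=\alpha$ survives the Kronecker delta. Now apply \eqref{eq:intmubeta} with $g(x)=f(\alpha+x)$ — this is a bounded function on $[0,1)$ because $\alpha+x\in[\alpha,\alpha+1)\subseteq[0,b)$ — to rewrite that term as $\tfrac1b\int_{[0,1)} f(\alpha+\tfrac{\beta+x}{b})\mu(\dx)$, which is precisely the right-hand side of \eqref{eq:intmu}.

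Finally, \eqref{eq:intmug} is nothing more than \eqref{eq:intmu} read through the substitution $f(y)=g(y/b)$: then $f(bx)=g(x)$, $f(0)=g(0)$, $f(d+x)=g(\tfrac{d+x}{b})$, and $f(\alpha+\tfrac{\beta+x}{b})=g(\tfrac{\alpha b+\beta+x}{b^2})$, so dividing the resulting identity through by $b$ yields \eqref{eq:intmug}. I do not expect a real obstacle here: the argument is a clean corollary of Lemma~\ref{lem:int}, and the only step that genuinely uses the standing hypothesis $\alpha\neq\beta$ is the vanishing of $\delta_{\beta,\alpha}$ in the proof of \eqref{eq:intmubeta} — exactly the point that fails when $\alpha=\beta$ and forces the separate treatment deferred to the final section. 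The one thing to be careful about throughout is that every argument fed to $f$ or $g$ stays inside its declared interval, and that the push-forward of the none-string's Dirac mass is correctly bookkept as $\delta_0$.
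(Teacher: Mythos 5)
Your proposal is correct and follows essentially the same route as the paper: prove \eqref{eq:intmubeta} first from the $d=\beta$ case of \eqref{eq:intmud} (where $\alpha\neq\beta$ kills the Kronecker term), then sum \eqref{eq:intmud} over all digits using $\sum_d\mu_d=\mu-\delta_0$ and substitute via \eqref{eq:intmubeta} to get \eqref{eq:intmu}, and finally rescale $f(y)=g(y/b)$ to obtain \eqref{eq:intmug}. The only difference is cosmetic normalization of the substitution; your attention to domains and to the Dirac at $0$ matches the paper's bookkeeping.
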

\begin{proof}
  We start with the second equation.  Define $f$ on $[0,b)$ by the formula
  $f(x) = \frac1b g(\frac xb)$.  Then $g(x) = bf(bx)$ so, according to
  \eqref{eq:intmud} and to the hypothesis made here that $\beta\neq\alpha$:
\begin{equation}
  \int_{[0,1)} g(x)\mu_\beta(\dx) = \int_{[0,1)} f(\beta + x)\mu(\dx) =
  \frac1b \int_{[0,1)}g(\frac{\beta + x}b)\mu(\dx)
\end{equation}

  We make the sum of all equations \eqref{eq:intmud}, for all digits $d$.  We
  then use that $\sum_{d\in\sD} \mu_d = \mu - \delta_0$.  So we obtain at this stage
  \begin{equation}
       b\int_{[0,1)}f(bx)\mu(\dx) - b f(0) 
      = \sum_{d\in\sD}\int_{[0,1)} f(d+x)\mu(\dx) -\int_{[0,1)}f(\alpha + x)\mu_\beta(\dx)
  \end{equation}
  We combine with the integral formula \eqref{eq:intmubeta} and obtain
  \eqref{eq:intmu}.  The last equation \eqref{eq:intmug} is a reformulation of
  the latter, using again $g(x) = bf(bx)$.
\end{proof}

Let $g$ be defined as $g(x) = x^{-1}$ for $x\geq b^{-1}$ and $g(x) = 0$ for $0\leq x <
b^{-1}$.  Then the integral of $g$ against $\mu$ is the sum over all positive
integers $l$ and all strings $X\in \cX$ of length $l$ and non-zero leading
digit $\sLD(X)$ of the quantities $x(X)^{-1} b^{-|X|} = n(X)^{-1}$.  Hence
(see the Introduction for definition of $K_0$):
  \begin{equation}
    \label{eq:Kint1}
    \int_{[b^{-1}, 1)}\frac{\mu(\dx)}x = \sum_{X\in\cX,\,
      \sLD(X)\neq0}\frac1{n(X)} = K_0
  \end{equation}
The above reasoning works independently of whether $\alpha\neq\beta$ or not.
\begin{proposition}
  The value of the subsum $K_0$ of the harmonic series where only denominators
  having no occurrence of the string $\alpha\beta$ are kept is equal to
  $\int_{[b^{-1}, 1)}\frac{\mu(\dx)}x$.

If $\alpha\neq\beta$, it can be
  reformulated in succession as:
  \begin{align}
    \label{eq:Kint2}
    K_0 &= \sum_{n_1=1}^{b-1}\int_{[0,1)}\frac{\mu(\dx)}{n_1 + x} 
    - \delta_{\alpha\neq0}(\alpha)\int_{[0,1)}\frac{\mu(\dx)}{\alpha b + \beta + x}    \\
\label{eq:Kint3}
K_0 &=
\begin{aligned}[t]
  \sum_{n_1=1}^{b-1}\frac1{n_1} + \sum_{\substack{1\leq n_1<b\\0\leq n_2<b}} \int_{[0,1)}\frac{\mu(\dx)}{n_1b + n_2 + x} 
  -\sum_{n_1=1}^{b-1}\int_{[0,1)}\frac{\mu(\dx)}{n_1 b^2 + \alpha b + \beta + x}\\
  - \delta_{\alpha\neq0}(\alpha)\int_{[0,1)}\frac{\mu(\dx)}{\alpha b + \beta + x}
\end{aligned}
  \end{align}
\end{proposition}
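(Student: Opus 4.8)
The plan is to establish the three displayed identities for $K_0$ in succession, starting from the already-proven integral representation \eqref{eq:Kint1} and iterating the functional equation \eqref{eq:intmug} of Lemma~\ref{lem:intmu}. First I would recall that \eqref{eq:Kint1} gives $K_0 = \int_{[b^{-1},1)} \mu(\dx)/x$ with no hypothesis on $\alpha,\beta$, so only the reformulations require $\alpha\neq\beta$. The key observation is that applying \eqref{eq:intmug} with $g(x) = x^{-1}\cdot\mathbf{1}_{x\geq b^{-1}}$ redistributes the single integral over $[b^{-1},1)$ into a sum of integrals over the full interval $[0,1)$: indeed $g(0)=0$, and $g((d+x)/b) = b/(d+x)$ on all of $[0,1)$ when $d\geq1$ while $g(x/b)=b\cdot\mathbf{1}_{x\geq b^{-1}}/x$ reproduces the original integrand scaled, so the $d=0$ term on the right of \eqref{eq:intmug} must be handled separately — that is where the $\delta_{\alpha\neq0}$ correction appears.

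More precisely, for the first reformulation \eqref{eq:Kint2} I would write $g(x)=bf(bx)$ as in the lemma, so that \eqref{eq:intmug} reads
\[
\int_{[0,1)} g(x)\mu(\dx) = g(0) + \frac1b\sum_{d\in\sD}\int_{[0,1)} g\Bigl(\frac{d+x}b\Bigr)\mu(\dx) - \frac1{b^2}\int_{[0,1)} g\Bigl(\frac{\alpha b+\beta+x}{b^2}\Bigr)\mu(\dx).
\]
The term $d=0$ on the right gives $\frac1b\int g(x/b)\mu(\dx)$; since $g(x/b) = b/x$ for $x\geq b^{-1}$ and $0$ otherwise, this is exactly $\int_{[b^{-1},1)}\mu(\dx)/x = K_0$ again, so moving it to the left side and using $g(0)=0$ yields
\[
\Bigl(1-\tfrac1b\Bigr)K_0 \;=\; \frac1b\sum_{d=1}^{b-1}\int_{[0,1)} g\Bigl(\frac{d+x}b\Bigr)\mu(\dx) - \frac1{b^2}\int_{[0,1)} g\Bigl(\frac{\alpha b+\beta+x}{b^2}\Bigr)\mu(\dx).
\]
Wait — this is not quite the shape of \eqref{eq:Kint2}; the cleaner route is instead to apply \eqref{eq:intmu} directly (the version with $f$, not $g$), taking $f$ supported away from a neighbourhood of $0$ so that $f(0)=0$ and the $d=0$ term $\int f(x)\mu(\dx)$ vanishes. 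Choosing $f(x) = x^{-1}\mathbf{1}_{x\geq 1}$ on $[0,b)$, one has $b\int f(bx)\mu(\dx) = b\int_{[b^{-1},1)}(bx)^{-1}\mu(\dx)\cdot b = b K_0$ after tracking constants, while $f(d+x) = (d+x)^{-1}$ for $d\geq1$ on $[0,1)$ and the $\alpha$-correction term becomes $\frac1b f(\alpha+(\beta+x)/b) = (\alpha b+\beta+x)^{-1}$ — but this last is nonzero only when $\alpha\geq1$, i.e. $\alpha\neq0$, giving the $\delta_{\alpha\neq0}(\alpha)$ factor. That produces \eqref{eq:Kint2} after dividing by the appropriate constant. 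The main obstacle is exactly this bookkeeping: matching the normalizing powers of $b$, and correctly identifying that $f(0)=0$ eliminates the Dirac-at-$0$ contribution while the argument $\alpha+(\beta+x)/b$ lands outside the support of $f$ precisely when $\alpha=0$.

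For \eqref{eq:Kint3} I would iterate once more: take the integral $\int_{[0,1)}\mu(\dx)/(n_1+x)$ appearing in \eqref{eq:Kint2} and apply \eqref{eq:intmug} to $g_{n_1}(x) = (n_1+x)^{-1}$, which is bounded on $[0,1)$ since $n_1\geq1$. This splits each such integral as $g_{n_1}(0) = 1/n_1$, plus $\frac1b\sum_{n_2\in\sD}\int_{[0,1)} g_{n_1}((n_2+x)/b)\mu(\dx) = \sum_{n_2=0}^{b-1}\int_{[0,1)}\mu(\dx)/(n_1 b + n_2 + x)$, minus the correction $\frac1{b^2}\int g_{n_1}((\alpha b+\beta+x)/b^2)\mu(\dx) = \int_{[0,1)}\mu(\dx)/(n_1 b^2 + \alpha b + \beta + x)$. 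Summing over $n_1=1,\dots,b-1$ gives the first three groups of terms in \eqref{eq:Kint3}, and the last line is just the unchanged $\delta_{\alpha\neq0}$-correction carried over verbatim from \eqref{eq:Kint2}. Here the only delicate point is that now $g_{n_1}$ does \emph{not} vanish at $0$ (it equals $1/n_1$), so the $\sum 1/n_1$ term genuinely appears; and one must check that $\beta+x < b$ and $\alpha b+\beta+x < b^2$ so that all arguments of $g_{n_1}$ stay in the domain $[0,1)$ where \eqref{eq:intmug} was proven — which holds since $0\leq\alpha,\beta\leq b-1$ and $0\leq x<1$. I expect the whole proof to be three or four lines once the right choices of test function are in place; the conceptual work is entirely in selecting $f$ versus $g$ so that the boundary term at $0$ behaves as needed.
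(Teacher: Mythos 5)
Your final argument is correct and is essentially the paper's own proof: the paper applies \eqref{eq:intmug} first with $g(x)=x^{-1}\mathbf{1}_{x\geq b^{-1}}$ and then with $g(x)=(n_1+x)^{-1}$ for $1\leq n_1<b$, which is exactly your $f$-route rewritten via $g(x)=bf(bx)$. The detour in your second paragraph was forced by a miscomputation rather than by any defect of the $g$-formulation: for $x\in[0,1)$ one has $x/b<b^{-1}$, so $g(x/b)$ vanishes identically and the $d=0$ term contributes nothing --- it does not reproduce $K_0$ --- hence the left-hand side is $K_0$ on the nose, \eqref{eq:Kint2} drops out directly, and no division by $(1-\tfrac1b)$ or by an ``appropriate constant'' is needed (likewise $b\int f(bx)\mu(\dx)=K_0$, not $bK_0$, in your $f$-version). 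Your derivation of \eqref{eq:Kint3} matches the paper's.
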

\begin{proof}
  We explained \eqref{eq:Kint1} already.  For \eqref{eq:Kint2} we use
  \eqref{eq:intmug} with $ g(x) = x^{-1}$ for $x\geq b^{-1}$ and $g(x) = 0$
  for $0\leq x < b^{-1}$.  Then we apply again \eqref{eq:intmug} but using now
  the functions $(n_1+x)^{-1}$, $1\leq n_1<b$, and we obtain \eqref{eq:Kint3}.
\end{proof}
To analyse the combinatorial meaning of \eqref{eq:Kint3} we define for any
positive integer $n$ (whether or not it contains the $\alpha\beta$ string):
\begin{equation}\label{eq:un}
  U(n) = \int_{[0,1)} \frac{\mu(\dx)}{n+x}
\end{equation}
We may also use in intermediate steps the allied function
$U_\beta(n) =  \int_{[0,1)} \frac{\mu_{\beta}(\dx)}{n+x}$, but this is simply
according to equation \eqref{eq:intmubeta}
\begin{equation}
  U_\beta(n) = U(nb + \beta)
\end{equation}
Let's use the notation $\overline{d_1d_2\dots d_k}$ for the integer with these
digits $d_i$ (in this order).  We can reformulate \eqref{eq:Kint3} as:
\begin{equation}\label{eq:KU0}
  K_0 =   \sum_{n_1=1}^{b-1}\frac1{n_1} 
  + \sum_{b\leq \overline{n_1n_2}<b^2} U(\overline{n_1n_2})
  - \delta_{\alpha\neq0}(\alpha)U(\overline{\alpha\beta})
  -\sum_{n_1=1}^{b-1}U(\overline{n_1\alpha \beta})
\end{equation}
Let $l$ be the number of digits of $n>0$.  Arguing as for equation \eqref{eq:Kint1}:
\begin{equation}\label{eq:un_as_Ksum}
  U(n) = \sum_{\ld_l(m)=n,\text{ trailing part has no }\alpha\beta}\frac1m
\end{equation}
Here $\ld_l(m)$ is defined to be the integer constituted from the $l$ leading
digits of $m$.
By trailing part we mean the string of those digits of $m$ after the first $l$
ones.  Similarly
\begin{equation}
  U_\beta(n)
  \begin{aligned}[t]
   &= \sum_{\ld_{l}(m)=n,\text{ trailing part starts with
      }\beta\text{ and has no }\alpha\beta}\frac1m\\
   &= 
\sum_{\ld_{l+1}(m)=nb+\beta,\text{ trailing part has no }\alpha\beta}\frac1m
  \end{aligned}
\end{equation}
This confirms $U_\beta(n) = U(nb+\beta)$.

The integral identity \eqref{eq:intmug} applied to the fonction $g(x) = (n+x)^{-1}$
for $n$ a positive integer, gives
\begin{equation}
  \label{eq:Ueq}
  U(n) = \frac1n + \sum_{0\leq n_2<b} U(n b +n_2) - U(nb^2+\alpha b +\beta)
\end{equation}

Let's pause to comment on the combinatorics of \eqref{eq:KU0}.
In formula \eqref{eq:KU0} we first have the contribution of single-digit
numbers then we group terms in $K_0$ according to their first two digits $n_1$
and $n_2$.  Of course they are not allowed to be the $\alpha\beta$ length $2$
string.  So we must not incorporate $U(\overline{\alpha\beta})$.  But if
$\alpha=0$ there is nothing to take out as $n_1>0$ anyhow.  This explains the
term with $\delta_{\alpha\neq0}$.  As per the last term it is due to the
problem that when $n_2=\alpha$ we should not add digits to the right with
first one being $\beta$.  In $U(\overline{n_1\alpha})$ there will be a
contribution for this situation.  Hence we must now remove all $1/m$'s where
$m$ starts with digits $n_1\alpha\beta$ and then has a trailing string not
containing $\alpha\beta$.  This quantity is $U_{\beta}(\overline{n_1\alpha})$.
This explains all terms in \eqref{eq:KU0}.

We now turn to $K_1$ and study it via the measure $\nu$.  We need first the
generalization of the integral equations from Lemma \ref{lem:intmu}:
\begin{lemma}\label{lem:intnu}
  Let $f$ be any bounded function on $[0,b)$. Suppose $\alpha\neq\beta$.  Then
  \begin{equation}\label{eq:intnu}
    \begin{split}
      b\int_{[0,1)}f(bx)\nu(\dx) 
      = \sum_{d\in\sD}\int_{[0,1)} f(d+x)\nu(\dx) -
      \frac1b\int_{[0,1)}f(\alpha+\frac{\beta+ x}b)\nu(\dx)
      \\+\frac1b\int_{[0,1)}f(\alpha+\frac{\beta+ x}b)\mu(\dx)
    \end{split}
  \end{equation}
And for any bounded function $g$ on $[0,1)$:
\begin{equation}\label{eq:intnubeta}
  \int_{[0,1)} g(x)\nu_\beta(\dx) =
  \frac1b\int_{[0,1)} g(\frac{\beta + x}b)\nu(\dx)
\end{equation}
  \begin{equation}\label{eq:intnug}
    \begin{split}
      \int_{[0,1)}g(x)\nu(\dx) 
      = \frac1b\sum_{d\in\sD}\int_{[0,1)} g(\frac{d+x}b)\nu(\dx) -
      \frac1{b^2}\int_{[0,1)}g(\frac{\alpha b + \beta+ x}{b^2})\nu(\dx)
      \\+\frac1{b^2}\int_{[0,1)}g(\frac{\alpha b + \beta+ x}{b^2})\mu(\dx)
    \end{split}
  \end{equation}
\end{lemma}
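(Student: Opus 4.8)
The plan is to mimic exactly the structure of the proof of Lemma \ref{lem:intmu}, replacing the $\mu$-relation \eqref{eq:intmud} by the $\nu$-relation \eqref{eq:intnud}; the only genuinely new feature is that the $\nu$-pushforward of $\mu_\beta$ carries an extra $+\mu_\beta$ term when $d=\alpha$, which is precisely what produces the additional $\mu$-integrals on the right-hand sides. I would start, as in the earlier lemma, with the second displayed equation \eqref{eq:intnubeta}: set $f(x)=\tfrac1b g(\tfrac xb)$ on $[0,b)$, so that $g(x)=bf(bx)$, and apply \eqref{eq:intnud} with $d=\beta$. Since we are assuming $\alpha\neq\beta$, the Kronecker deltas $\delta_{d,\alpha}$ vanish for $d=\beta$, so the right-hand side of \eqref{eq:intnud} is just $\int_{[0,1)}f(\beta+x)\nu(\dx)=\tfrac1b\int_{[0,1)}g(\tfrac{\beta+x}b)\nu(\dx)$, giving \eqref{eq:intnubeta} immediately.

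Next I would prove \eqref{eq:intnu}. Sum the identities \eqref{eq:intnud} over all digits $d\in\sD$. On the left one gets $b\int f(bx)\nu(\dx)$ since $\sum_{d}\nu_d=\nu$ (here, unlike for $\mu$, there is no Dirac at the none-string, so no stray $f(0)$ term appears — this matches the absence of a $bf(0)$ term in \eqref{eq:intnu}). On the right, the term $\sum_d\int f(d+x)\nu(\dx)$ survives untouched, while the $d=\alpha$ summand contributes the extra $-\int f(\alpha+x)\nu_\beta(\dx)+\int f(\alpha+x)\mu_\beta(\dx)$. Now apply \eqref{eq:intnubeta} with $g(x)=f(\alpha+x)$ to rewrite $\int f(\alpha+x)\nu_\beta(\dx)=\tfrac1b\int f(\alpha+\tfrac{\beta+x}b)\nu(\dx)$, and apply \eqref{eq:intmubeta} (from Lemma \ref{lem:intmu}, which is available since $\alpha\neq\beta$) the same way to get $\int f(\alpha+x)\mu_\beta(\dx)=\tfrac1b\int f(\alpha+\tfrac{\beta+x}b)\mu(\dx)$. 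Substituting these yields \eqref{eq:intnu}.

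Finally, \eqref{eq:intnug} is obtained from \eqref{eq:intnu} by the same change of variable already used twice above: given a bounded $g$ on $[0,1)$, define $f$ on $[0,b)$ by $f(x)=\tfrac1b g(\tfrac xb)$, so $g(x)=bf(bx)$, and read off each term. The left side $b\int f(bx)\nu(\dx)$ becomes $\int g(x)\nu(\dx)$; each $\int f(d+x)\nu(\dx)$ becomes $\tfrac1b\int g(\tfrac{d+x}b)\nu(\dx)$; the term $\tfrac1b\int f(\alpha+\tfrac{\beta+x}b)\nu(\dx)$ becomes $\tfrac1{b^2}\int g(\tfrac{\alpha b+\beta+x}{b^2})\nu(\dx)$ since $f(\alpha+\tfrac{\beta+x}b)=\tfrac1b g(\tfrac{\alpha b+\beta+x}{b^2})$; and likewise the $\mu$-term becomes $\tfrac1{b^2}\int g(\tfrac{\alpha b+\beta+x}{b^2})\mu(\dx)$. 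This reproduces \eqref{eq:intnug} exactly.

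The only point requiring a little care — and the closest thing to an obstacle — is bookkeeping the $\delta_{d,\alpha}$ terms in \eqref{eq:intnud} when summing over $d$: one must verify that the hypothesis $\alpha\neq\beta$ is genuinely used (so that applying \eqref{eq:intnud} at $d=\beta$ picks up no delta-correction, and so that \eqref{eq:intmubeta} is legitimately available), and that the $-\nu_\beta$ and $+\mu_\beta$ pieces both get rewritten through the \emph{same} affine substitution $x\mapsto\alpha+\tfrac{\beta+x}b$, which is why they end up as a clean difference of a $\nu$-integral and a $\mu$-integral rather than something messier. Everything else is the routine affine change of variables already rehearsed in Lemma \ref{lem:intmu}.
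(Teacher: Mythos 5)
Your proposal is correct and follows essentially the same route as the paper: prove \eqref{eq:intnubeta} first via the substitution $f(x)=\tfrac1b g(\tfrac xb)$ applied to \eqref{eq:intnud} at $d=\beta$ (using $\alpha\neq\beta$), then sum \eqref{eq:intnud} over all digits using $\sum_d\nu_d=\nu$, rewrite the $d=\alpha$ correction terms via \eqref{eq:intnubeta} and \eqref{eq:intmubeta}, and obtain \eqref{eq:intnug} by the same change of variable. The bookkeeping points you flag (no Dirac term for $\nu$, the role of $\alpha\neq\beta$) are exactly the ones the paper relies on.
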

\begin{proof}
  We start with equation \eqref{eq:intnubeta}.  Define $f$ on $[0,b)$ by the formula
  $f(x) = \frac1b g(\frac xb)$.  Then $g(x) = bf(bx)$ so, according to
  \eqref{eq:intnud} and to the fact that $\beta\neq\alpha$:
\[
  \int_{[0,1)} g(x)\nu_\beta(\dx) = \int_{[0,1)} f(\beta + x)\nu(\dx) =
  \frac1b \int_{[0,1)}g(\frac{\beta + x}b)\nu(\dx)
\]
  We make the sum of all equations \eqref{eq:intnud}, for all digits $d$.  We
  use that $\sum_{d\in\sD} \nu_d = \nu$.  So we obtain at this stage
  \begin{equation}
    \begin{split}
      b\int_{[0,1)}f(bx)\nu(\dx)
      = \sum_{d\in\sD}\int_{[0,1)} f(d+x)\nu(\dx) 
       -\int_{[0,1)}f(\alpha + x)\nu_\beta(\dx)
      \\+\int_{[0,1)}f(\alpha + x)\mu_\beta(\dx)
    \end{split}
  \end{equation}
  We combine with the integral formulas \eqref{eq:intnubeta} and
  \eqref{eq:intmubeta} and obtain \eqref{eq:intnu}.  The last equation
  \eqref{eq:intnug} is a reformulation of the latter, using again $g(x) =
  bf(bx)$.
\end{proof}

Let's take $g(x) = x^{-1}$ for $x\geq b^{-1}$ and $g(x) = 0$ for $0\leq x <
b^{-1}$.  Then the integral of $g$ against $\nu$ is the sum over all positive
integers $l$ and all strings $X\in \cY$ (i.e. strings with exactly one
occurrence of $\alpha\beta$) of length $l$ and non-zero leading digit of
$x(X)^{-1} b^{-|X|} = n(X)^{-1}$.  Hence:
  \begin{equation}
    \label{eq:Kone1}
    K_1 = \int_{[b^{-1}, 1)}\frac{\nu(\dx)}x
  \end{equation}
\begin{proposition}
  The value of the subsum $K_1$ of the harmonic series where only denominators
  having exactly one occurrence of the string $\alpha\beta$ are kept is equal
  to $\int_{[b^{-1}, 1)}\frac{\nu(\dx)}x$.  For $\alpha\neq\beta$, it can be
  reformulated in succession as:
  \begin{align}
    \label{eq:Kone2}
    K_1 &=
      \sum_{n=1}^{b-1}\int_{[0,1)}\frac{\nu(\dx)}{n + x} 
      - \delta_{\alpha\neq0}(\alpha)\int_{[0,1)}\frac{(\nu-\mu)(\dx)}{\alpha b + \beta + x}
\\
\label{eq:Kone3}
K_1 &=
\begin{aligned}[t]
  \sum_{\substack{1\leq n_1<b\\0\leq n_2<b}} \int_{[0,1)}\frac{\nu(\dx)}{n_1b + n_2 + x}
     &-\sum_{n_1=1}^{b-1}\int_{[0,1)}\frac{(\nu-\mu)(\dx)}{n_1 b^2 + \alpha b + \beta + x}\\
    &- \delta_{\alpha\neq0}(\alpha)\int_{[0,1)}\frac{(\nu-\mu)(\dx)}{\alpha b + \beta + x}
\end{aligned}
  \end{align}
\end{proposition}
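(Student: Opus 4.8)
The plan is to mirror, essentially verbatim, the argument given for the $K_0$ counterpart, using Lemma~\ref{lem:intnu} in place of Lemma~\ref{lem:intmu}. The identity \eqref{eq:Kone1}, $K_1=\int_{[b^{-1},1)}\nu(\dx)/x$, has already been obtained above: the strings in $\cY$ with nonzero leading digit are precisely the $b$-representations of the positive integers with exactly one occurrence of $\alpha\beta$, and $x(X)^{-1}b^{-|X|}=n(X)^{-1}$, so the sum against $\nu$ of the function ``$x^{-1}$ on $[b^{-1},1)$, zero below'' reproduces $K_1$ term by term, exactly as for \eqref{eq:Kint1}. Nothing new is needed here.

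For \eqref{eq:Kone2} I would apply the integral identity \eqref{eq:intnug} to that same bounded function $g$ on $[0,1)$, defined by $g(x)=x^{-1}$ for $x\ge b^{-1}$ and $g(x)=0$ for $0\le x<b^{-1}$. Its left-hand side is then $K_1$. On the right-hand side every argument $\tfrac{d+x}b$ and $\tfrac{\alpha b+\beta+x}{b^2}$ lies in $[0,1)$, so the formula applies: the $d=0$ summand drops because $x/b<b^{-1}$, while for $1\le d<b$ one has $g(\tfrac{d+x}b)=\tfrac b{d+x}$ and the prefactor $\tfrac1b$ cancels the $b$, giving $\sum_{n=1}^{b-1}\int\nu(\dx)/(n+x)$; and since $\beta+x<b$ always, the argument $\tfrac{\alpha b+\beta+x}{b^2}$ is $\ge b^{-1}$ exactly when $\alpha\neq0$, so the last two terms survive only in that case and, after cancelling $\tfrac1{b^2}$ against $b^2$, combine into $-\delta_{\alpha\neq0}(\alpha)\int(\nu-\mu)(\dx)/(\alpha b+\beta+x)$. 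This is \eqref{eq:Kone2}.

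For \eqref{eq:Kone3} I would then apply \eqref{eq:intnug} once more, now to each of the bounded functions $g_{n_1}(x)=(n_1+x)^{-1}$ with $1\le n_1<b$ (here all $b$ summands and the correction term are present, $g_{n_1}$ being an honest function on all of $[0,1)$). From $n_1+\tfrac{d+x}b=\tfrac{n_1b+d+x}b$ and $n_1+\tfrac{\alpha b+\beta+x}{b^2}=\tfrac{n_1b^2+\alpha b+\beta+x}{b^2}$ the powers of $b$ again cancel, and \eqref{eq:intnug} reads $\int\nu(\dx)/(n_1+x)=\sum_{n_2=0}^{b-1}\int\nu(\dx)/(n_1b+n_2+x)-\int(\nu-\mu)(\dx)/(n_1b^2+\alpha b+\beta+x)$. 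Substituting these $b-1$ identities back into \eqref{eq:Kone2} and regrouping the resulting double sum over $(n_1,n_2)$ yields \eqref{eq:Kone3}.

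The argument is formally routine once Lemma~\ref{lem:intnu} is in hand; the only point requiring genuine attention — the ``obstacle'', such as it is — is the boundary bookkeeping, i.e.\ correctly identifying which shifted-and-rescaled evaluations of $g$ fall below $b^{-1}$ and therefore vanish (the $d=0$ term, and the $\alpha b+\beta+x$ terms when $\alpha=0$), and verifying that every argument indeed stays inside the half-open interval $[0,1)$ on which $\mu$, $\nu$ and the test functions are defined. All the rest is the same cancellation of powers of $b$ already met in deriving \eqref{eq:KU0}.
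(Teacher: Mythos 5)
Your proposal is correct and follows essentially the same route as the paper: apply \eqref{eq:intnug} first to the truncated reciprocal function to get \eqref{eq:Kone2}, then to each $(n_1+x)^{-1}$ and substitute to get \eqref{eq:Kone3}. The boundary bookkeeping you spell out (the $d=0$ term vanishing, and the $\delta_{\alpha\neq0}$ coming from whether $(\alpha b+\beta+x)/b^2\geq b^{-1}$) is exactly what the paper's terser proof leaves implicit.
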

\begin{proof}
  We explained \eqref{eq:Kone1} already.  For \eqref{eq:Kone2} we use
  \eqref{eq:intnug} with $ g(x) = x^{-1}$ for $x\geq b^{-1}$ and $g(x) = 0$
  for $0\leq x < b^{-1}$.  Then we apply again \eqref{eq:intnug} but using now
  the functions $(n_1+x)^{-1}$ and we obtain \eqref{eq:Kone3}.
\end{proof}
To analyse the combinatorial meaning of \eqref{eq:Kone3} we define
for positive integers $n$:
\begin{equation}\label{eq:vn}
  V(n) = \int_{[0,1)} \frac{\nu(\dx)}{n+x}
\end{equation}
Let $l$ be the number of digits of $n$.  Arguing as precedently:
\begin{equation}\label{eq:vn_as_Ksum}
  V(n) = \sum_{\ld_l(m)=n,\text{ trailing part has exactly one }\alpha\beta}\frac1m
\end{equation}
There is an identity which we deduce from using \eqref{eq:intnug} with $g(x) = (n+x)^{-1}$:
\begin{equation}
  \label{eq:Veq}
  V(n) = \sum_{0\leq n_1<b} V(nb + n_1) - V(nb^2+\alpha b+\beta) + U(nb^2+\alpha b+\beta)
\end{equation}

We will use in intermediate steps the auxiliary $V_\beta(n) = \int_{[0,1)}
\frac{\nu_{\beta}(\dx)}{n+x}$ which according to equation \eqref{eq:intnug} is
simply $V(nb+\beta)$.

Recall $\alpha\neq\beta$.  Comparing \eqref{eq:Kone3} with \eqref{eq:Kint3} we
see it appears provisorily advantageous to consider the sum $K_0 + K_1$:
\begin{equation}
  \begin{aligned}
    K_0 + K_1 
=  \sum_{n_1=1}^{b-1}\frac1{n_1} + \sum_{\substack{1\leq n_1<b\\0\leq n_2<b}} \int_{[0,1)}\frac{(\mu+\nu)(\dx)}{n_1b + n_2 + x}
    &-\sum_{n_1=1}^{b-1}\int_{[0,1)}\frac{\nu(\dx)}{n_1 b^2 + \alpha b + \beta + x}\\
    &- \delta_{\alpha\neq0}(\alpha)\int_{[0,1)}\frac{\nu(\dx)}{\alpha b + \beta + x}
  \end{aligned}
\end{equation}
This gives:
\begin{equation}
  \label{eq:Katmost}
  K_0 + K_1 = \sum_{n=1}^{b-1}\frac1{n} + \sum_{b\leq \overline{n_1n_2}<b^2}
  U(\overline{n_1n_2})
+  \sum_{b\leq \overline{n_1n_2}\neq \overline{\alpha\beta}<b^2} V(\overline{n_1n_2}) 
- \sum_{n_1=1}^{b-1}V_\beta(\overline{n_1\alpha})
\end{equation}
Explanation: we are summing $1/n$ for positive integers containing at most one
instance of $\alpha\beta$.  First comes the contribution of single-digit
integers.  Then we handle all integers $n$ with at least two digits and the
first two digits are $n_1n_2$, so $n=n_1n_2Z$ where $n_1\geq1$ and $Z$ is a
trailing string which has at most one occurrence of $\alpha\beta$.  Our count
is in excess: it includes contributions from integers having 2 instances of
$\alpha\beta$.  We must first remove the terms where $n_1n_2=\alpha\beta$ and
$Z$ contains exactly one occurrence of $\alpha\beta$.  The only other way to
have two occurrences is when $n_2=\alpha$, and $Z$ starts with $\beta$ and $Z$
has one occurrence of $\alpha\beta$.  We must take this out.
%
%

Combining \eqref{eq:KU0} with we deduce the last proposition of this section:
\begin{proposition}\label{prop:K}
  Let $K_i$, $i\in\NN$, be the sums of the reciprocals of integers having
  exactly $i$ occurrences of the string $\alpha\beta$.  Suppose
  $\boxed{\alpha\neq\beta}$.  Then $K_0$ and $K_1$ are expressed as
  combinations of the functions $U(n)$ and $V(n)$ defined in equations
  \eqref{eq:un} and \eqref{eq:vn} as Stieltjes transforms (up to sign) of
  certain measures $\mu$ and $\nu$ on the half-open interval $[0,1)$:
\begin{align}
  K_0 &= \sum_{n_1=1}^{b-1}\frac1{n_1}
  + \sum_{b\leq \overline{n_1n_2}\neq\overline{\alpha\beta}<b^2} U(\overline{n_1n_2})
   -\sum_{1\leq n_1<b}U(\overline{n_1\alpha\beta})
  \\[2\jot]
K_1 &=
\begin{aligned}[t]
  &\sum_{b\leq \overline{n_1n_2}\neq\overline{\alpha\beta}<b^2} V(\overline{n_1n_2})
  - \sum_{1\leq n_1< b} V(\overline{n_1\alpha\beta})\\
  &+ \delta_{\alpha\neq0} U(\overline{\alpha\beta})
  + \sum_{1\leq n_1< b} U(\overline{n_1\alpha\beta})
\end{aligned}
\end{align}
We used notations such as $\overline{n_1n_2}$ for the integer $n_1b+n_2$.  The
$U$ and $V$ functions obey the recursive identities \eqref{eq:Ueq} and
\eqref{eq:Veq} allowing to replace their arguments with arguments having more
digits.
\end{proposition}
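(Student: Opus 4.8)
The plan is to assemble the two displayed formulas from material already established, leaving essentially only bookkeeping.

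For $K_0$: equation \eqref{eq:KU0} is already the asserted expression, once one observes that $\sum_{b\leq\overline{n_1n_2}<b^2}U(\overline{n_1n_2})-\delta_{\alpha\neq0}U(\overline{\alpha\beta})=\sum_{b\leq\overline{n_1n_2}\neq\overline{\alpha\beta}<b^2}U(\overline{n_1n_2})$. This is immediate when $\alpha\neq0$; when $\alpha=0$ both sides coincide because then $\overline{\alpha\beta}=\beta<b$ lies outside the range $[b,b^2)$ while simultaneously the Kronecker flag vanishes. Thus the first line of the proposition is merely \eqref{eq:KU0} rewritten, and its combinatorial reading was already supplied in the discussion following \eqref{eq:KU0}.

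For $K_1$: I would start from \eqref{eq:Katmost} for $K_0+K_1$ — obtained by comparing \eqref{eq:Kone3} with \eqref{eq:Kint3} and using $V_\beta(\overline{n_1\alpha})=V(\overline{n_1\alpha\beta})$, the $V$-analogue of $U_\beta(n)=U(nb+\beta)$ coming from \eqref{eq:intnubeta} — and subtract the $K_0$ line term by term. The single-digit sums $\sum_{n_1}1/n_1$ cancel. The two $U$-sums differ only in the $\overline{\alpha\beta}$ term: \eqref{eq:Katmost} carries the full range $b\leq\overline{n_1n_2}<b^2$ whereas the $K_0$ line carries it with $\overline{\alpha\beta}$ deleted, so the difference contributes $\delta_{\alpha\neq0}U(\overline{\alpha\beta})$ — the flag once more correctly recording that for $\alpha=0$ no such term is present. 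What remains of \eqref{eq:Katmost} after removing the $K_0$ contribution is the $V$-sum over $b\leq\overline{n_1n_2}\neq\overline{\alpha\beta}<b^2$ minus $\sum_{n_1}V(\overline{n_1\alpha\beta})$, and restoring the $-\sum_{n_1}U(\overline{n_1\alpha\beta})$ that had been subtracted inside $K_0$ produces the $+\sum_{n_1}U(\overline{n_1\alpha\beta})$ summand. Collecting yields the second displayed line.

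The remaining claims are free: $U$ and $V$ are, up to sign, Stieltjes transforms of $\mu$ and $\nu$ evaluated at negative integers straight from the definitions \eqref{eq:un} and \eqref{eq:vn}, and the recursions \eqref{eq:Ueq} and \eqref{eq:Veq} were already derived from \eqref{eq:intmug} and \eqref{eq:intnug} applied to $g(x)=(n+x)^{-1}$. Hence nothing genuinely difficult is left: the substance — the push-forward Lemma \ref{lem:main}, the integral identities of Lemmas \ref{lem:intmu} and \ref{lem:intnu}, and the ``level $2$'' expansions \eqref{eq:Kint3} and \eqref{eq:Kone3} — is all prior. The only obstacle, such as it is, will be keeping the $\alpha=0$ versus $\alpha\neq0$ dichotomy straight: one must check at each step that every $\delta_{\alpha\neq0}$ flag faithfully tracks whether $\overline{\alpha\beta}$ lies in $[b,b^2)$, so that the restricted index ranges $\overline{n_1n_2}\neq\overline{\alpha\beta}$ and the boundary $U(\overline{\alpha\beta})$ term are consistent.
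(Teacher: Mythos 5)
Your proposal is correct and follows essentially the same route as the paper: the $K_0$ line is \eqref{eq:KU0} with the $\delta_{\alpha\neq0}U(\overline{\alpha\beta})$ term absorbed into the restricted index range, and the $K_1$ line is obtained by subtracting that from \eqref{eq:Katmost}, using $V_\beta(\overline{n_1\alpha})=V(\overline{n_1\alpha\beta})$. Your care with the $\alpha=0$ dichotomy matches what the paper implicitly relies on.
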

\begin{remark}
  This proposition, \emph{except for the} fact that $U(n)$ and $V(n)$ are
  Stieltjes transforms can be established with no preparatory work immediately
  from \eqref{eq:un_as_Ksum} and \eqref{eq:vn_as_Ksum} in few lines of
  combinatorial explanations (that we have also included here).  The
  expression as integrals is what will convert ultimately
  the Proposition into a numerical algorithm.
\end{remark}

\section{Moments and their recurrences}

We define for all non-negative integers:
\begin{equation}
  u_m = \mu(x^m) = \int_{[0,1)} x^m \mu(\dx)\qquad
  v_m = \nu(x^m) = \int_{[0,1)} x^m \nu(\dx)
\end{equation}
We know $u_0 = b^2$.  From equation \eqref{eq:intmu} we get
for $m\geq1$,
\begin{equation}
  b^{m+1}u_m
    =\sum_{0\leq d<b}\sum_{j=0}^m\binom{m}{j}d^ju_{m-j}
     -b^{-m-1}\sum_{j=0}^m\binom{m}{j}(\alpha b + \beta)^j u_{m-j}
\end{equation}
Using the definition
\begin{equation}\label{eq:gammaj}
  \gamma_j =  \sum_{0\leq d < b} d^j\qquad (\gamma_0 = b)
\end{equation}
the recurrence takes the shape
\begin{equation}
\label{eq:recurru}
 (b^{m+1} - b + b^{-m-1}) u_m   = 
 \sum_{j=1}^{m}\binom{m}{j} \Bigl(\gamma_j - b^{-m-1}(\alpha b + \beta)^j\Bigr) u_{m-j}
\end{equation}
For example for $m=1$ this gives
\begin{equation}
  (b^2 -b + b^{-2})u_1 = \Bigl(\frac{b(b-1)}2 - b^{-2}(\alpha b + \beta)\Bigr) u_0
\end{equation}
We observe that $u_1 < \frac12 u_0$.  We will not investigate here the $(u_m)$
sequence asymptotics as we did in the analogous case of a one-digit excluded
string in \cite{burnolkempner}.  We know a priori from its definition as the
sequence of moments for the measure $\mu$ (which is supported on $[0,1)$ and not
reduced to a multiple of a Dirac at the origin) that it is a positive strictly
decreasing sequence converging to zero.

We can express $U(n)$ with these moments:
\begin{equation}
\label{eq:unseries}
  U(n) = \frac{b^2}{n} + \sum_{m=1}^\infty (-1)^m \frac{u_m}{n^{m+1}}
\end{equation}
Indeed it is only a matter to expand $(n+x)^{-1}$ in a series in powers of
$x/n$ and then integrate termwise.  We are using only integers with at least
two digits, but it would work also with $n=1$.

We can now give a geometrically converging alternating series in the spirit of
\cite{burnolkempner,burnolirwin} to compute the ``no $\alpha\beta$'' harmonic
series:
\begin{theorem}\label{thm:K0}
  Let $b>1$ and let $\alpha\neq \beta$ two \emph{distinct} digits in base $b$,
  and let $K_0$ be defined as formerly.  There holds
\begin{equation*}
\begin{split}
  K_0 = \sum_{n_1=1}^{b-1}\frac1{n_1}
  + b^2\Bigl(\sum_{\substack{1\leq n_1<b\\0\leq n_2<b\\(n_1,n_2)\neq(\alpha,\beta)}} \frac1{n_1b+n_2}
  -\sum_{1\leq n_1<b}\frac1{n_1b^2+\alpha b + \beta}\Bigr)
  \\+ \sum_{m=1}^\infty (-1)^m u_m\Bigl(
  \sum_{\substack{1\leq n_1<b\\0\leq n_2<b\\(n_1,n_2)\neq(\alpha,\beta)}} \frac1{(n_1 b + n_2)^{m+1}}
  -\sum_{1\leq n_1<b}\frac1{(n_1 b^2 + \alpha b + \beta)^{m+1}}\Bigr)
\end{split}
\end{equation*}
where the sequence $(u_m)$ obeys $u_0=b^2$ and equation \eqref{eq:recurru}.
\end{theorem}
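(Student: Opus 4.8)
The plan is to substitute the moment expansion \eqref{eq:unseries} into the expression for $K_0$ furnished by Proposition \ref{prop:K} and then to regroup by the index $m$.

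First I would assemble the ingredients already at hand. Proposition \ref{prop:K} (in the case $\alpha\neq\beta$) gives
\[
  K_0 = \sum_{n_1=1}^{b-1}\frac1{n_1}
  + \sum_{b\leq \overline{n_1n_2}\neq\overline{\alpha\beta}<b^2} U(\overline{n_1n_2})
  - \sum_{1\leq n_1<b}U(\overline{n_1\alpha\beta}),
\]
a \emph{finite} combination of values $U(n)$ with $n\geq b\geq2$, while \eqref{eq:unseries} expands each such value as the convergent series $U(n) = b^2/n + \sum_{m\geq1}(-1)^m u_m\,n^{-m-1}$. Because a finite sum commutes termwise with convergent series, I may substitute \eqref{eq:unseries} into each summand above and interchange the finite sum over the two- and three-digit integers with the sum over $m$ with no convergence issue to address.

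Carrying out the substitution, with $\overline{n_1n_2}=n_1b+n_2$ and $\overline{n_1\alpha\beta}=n_1b^2+\alpha b+\beta$: the $m=0$ part of each series contributes $b^2$ times the bracketed difference of reciprocals of the two- and three-digit integers, the $m\geq1$ part contributes $(-1)^m u_m$ times the bracketed difference of the corresponding $(m{+}1)$-st powers, and the leading term $\sum_{n_1=1}^{b-1}1/n_1$ is carried along unchanged. This is exactly the asserted identity; the recurrence and initial value for $(u_m)$ are \eqref{eq:recurru} and $u_0=b^2$, already established.

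There is no genuine obstacle here. The two points that merit attention are the bookkeeping of the index set — keeping the exclusion $(n_1,n_2)\neq(\alpha,\beta)$ and the three-digit correction term correctly aligned across the substitution — and, for the geometrically converging character claimed, the observation that $0<u_m\leq u_0=b^2$ (the moments of $\mu$ form a positive decreasing sequence, as noted after \eqref{eq:recurru}), so the tail of the $m$-series is dominated by a geometric series of ratio at most $1/b\leq1/2$.
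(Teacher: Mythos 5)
Your proposal is correct and follows exactly the paper's own (one-line) proof: substitute the moment expansion \eqref{eq:unseries} for each $U(n)$ appearing in the finite combination of Proposition \ref{prop:K} and regroup by the power $m$, the $m=0$ terms supplying the $b^2$ block. The extra remarks on the harmless interchange of the finite outer sum with the convergent $m$-series and on $0<u_m\leq b^2$ are sound but not needed beyond what the paper already records.
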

\begin{proof}
  The formula is simply a combination of equation \eqref{eq:unseries} with
  Proposition \ref{prop:K}.
\end{proof}
\begin{remark}
  Notice that $(n_1b+\alpha)^{-m-1} - (n_1b^2+\alpha b+\beta)^{-m-1}>0$ and
  decreases when $m$ increases (left to reader).  And recall that the $u_m$
  are decreasing.  So we have here an alternating series with decreasing
  absolute values (even starting from $m=0$).
\end{remark}


Let's turn to the $v_m=\nu(x^m)$.  We know $v_0 = b^2$.  We use the integral
formula \eqref{eq:intnu} with $f(bx) = b^mx^m$:
\begin{equation}
  b^{m+1} v_m
  \begin{aligned}[t]
    ={}&\sum_{j=0}^m\binom{m}{j}\bigl(\sum_{0\leq d<b}d^j\bigr) v_{m-j}
      -b^{-m-1}\sum_{j=0}^m\binom{m}{j}(\alpha b + \beta)^j v_{m-j}\\
    &+ b^{-m-1}\sum_{j=0}^m\binom{m}{j}(\alpha b + \beta)^j u_{m-j}
  \end{aligned}
\end{equation}
Which can be reformulated as (see \eqref{eq:gammaj} for the $\gamma_j$'s):
\begin{equation}\label{eq:recurrv}
    (b^{m+1}  - b + b^{-m-1})v_m
  = 
  \begin{aligned}[t]
    &\sum_{j=1}^m\binom{m}{j}
    \begin{aligned}[t]
      \Bigl(&\bigl(\gamma_j- b^{-m-1}(\alpha b + \beta)^j\bigr)v_{m-j}\\
      &+ b^{-m-1}(\alpha b + \beta)^j u_{m-j}\Bigr)
    \end{aligned}
    \\
    &+b^{-m-1}u_m
  \end{aligned}
\end{equation}
From their definition as moments for the measure $\nu$, which is a discrete
and positive measure supported on $[0,1)$ and not reduced to a multiple of a
Dirac at the origin we know that the $v_m$'s build a positive decreasing and
converging to zero sequence.

We can express $V(n)$ as a power series:
\begin{equation}\label{eq:vnseries}
  V(n) = \frac{b^2}{n} + \sum_{m=1}^\infty (-1)^m \frac{v_m}{n^{m+1}}
\end{equation}
We can at last formulate the Theorem which motivated all the developments so
far in this paper:
\begin{theorem}\label{thm:K1}
  Let $b>1$ and $\alpha\neq \beta$ two \emph{distinct} digits in base $b$.
  Let $K_1$ be the sum of the reciprocals of integers having in base $b$
  exactly one occurrence of the string $\alpha\beta$.  There holds
\[
K_1 =
b^2\sum_{b\leq n<b^2}\frac1{n}
\begin{aligned}[t]
  &+ \sum_{m=1}^\infty (-1)^m v_m 
  \Biggl(\sum_{\substack{1\leq n_1<b\\ 0\leq n_2<b\\(n_1,n_2)\neq(\alpha,\beta)}}
  \frac1{\overline{n_1n_2}^{m+1}}
  - \sum_{1\leq n_1 <b}\frac1{\overline{n_1\alpha\beta}^{m+1}}
  \Biggr)\\
  &+ \sum_{m=1}^\infty (-1)^m u_m 
  \Biggl(\frac{\delta_{\alpha\neq0}}{\overline{\alpha\beta}^{m+1}}+ 
          \sum_{1\leq n_1 <b}\frac1{\overline{n_1\alpha\beta}^{m+1}}
   \Biggr)
\end{aligned}
\]
where the sequences $(u_m)$ and $(v_m)$ obey $u_0=b^2$, $v_0=b^2$ and the
recurrences \eqref{eq:recurru} respectively \eqref{eq:recurrv}.  Notations
such as $\overline{n_1n_2n_3}$ mean $n_1b^2+n_2b+n_3$.
\end{theorem}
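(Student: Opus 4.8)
The plan is to obtain the stated formula by feeding the power-series expansions \eqref{eq:unseries} and \eqref{eq:vnseries} for the Stieltjes transforms $U$ and $V$ into the expression for $K_1$ furnished by Proposition \ref{prop:K}, and then regrouping the resulting terms according to the power of the reciprocal that carries the moment $u_m$ or $v_m$. So the whole argument is essentially a bookkeeping computation once Proposition \ref{prop:K} and the two series expansions are in hand.

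First I would record that every integer argument occurring in Proposition \ref{prop:K} is at least $2$: the two-digit integers $\overline{n_1n_2}$ with $b\leq \overline{n_1n_2}<b^2$ satisfy $\overline{n_1n_2}\geq b\geq2$; the three-digit integers $\overline{n_1\alpha\beta}$ satisfy $\overline{n_1\alpha\beta}\geq b^2\geq4$; and when $\alpha\neq0$ the integer $\overline{\alpha\beta}=\alpha b+\beta$ is $\geq b\geq2$. Hence each of the series \eqref{eq:unseries}, \eqref{eq:vnseries} invoked here converges absolutely with geometric ratio at most $1/2$ (the moments being bounded by $u_0=v_0=b^2$), so the finitely many such series may be added and their terms rearranged freely. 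Carrying this out splits $K_1$ into a \emph{level $0$} piece collecting all the $b^2/n$ contributions, together with, for each $m\geq1$, a piece proportional to $(-1)^m v_m$ coming from the two $V$-sums and a piece proportional to $(-1)^m u_m$ coming from the two $U$-sums.

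For the level-$0$ piece, the contributions of $+\sum_{1\leq n_1<b}U(\overline{n_1\alpha\beta})$ and of $-\sum_{1\leq n_1<b}V(\overline{n_1\alpha\beta})$ each produce the term $\pm\, b^2\sum_{1\leq n_1<b}1/\overline{n_1\alpha\beta}$, and these cancel. What remains is $b^2\bigl(\sum_{b\leq\overline{n_1n_2}\neq\overline{\alpha\beta}<b^2}1/\overline{n_1n_2}+\delta_{\alpha\neq0}/\overline{\alpha\beta}\bigr)$. If $\alpha\neq0$ then $\overline{\alpha\beta}$ is precisely the excluded two-digit integer, and adding it back yields the complete sum $\sum_{b\leq n<b^2}1/n$; if $\alpha=0$ then $\overline{\alpha\beta}=\beta<b$ is not a two-digit integer at all, so the exclusion constraint is vacuous and $\delta_{\alpha\neq0}=0$, giving again $\sum_{b\leq n<b^2}1/n$. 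Either way the level-$0$ piece equals $b^2\sum_{b\leq n<b^2}1/n$, which is the leading term of the Theorem.

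Finally, for each fixed $m\geq1$ the terms carrying $v_m$ assemble into $(-1)^m v_m\bigl(\sum_{b\leq\overline{n_1n_2}\neq\overline{\alpha\beta}<b^2}\overline{n_1n_2}^{-m-1}-\sum_{1\leq n_1<b}\overline{n_1\alpha\beta}^{-m-1}\bigr)$, and the range $b\leq\overline{n_1n_2}\neq\overline{\alpha\beta}<b^2$ is just $\{(n_1,n_2):1\leq n_1<b,\ 0\leq n_2<b,\ (n_1,n_2)\neq(\alpha,\beta)\}$; likewise the terms carrying $u_m$ assemble into $(-1)^m u_m\bigl(\delta_{\alpha\neq0}\,\overline{\alpha\beta}^{-m-1}+\sum_{1\leq n_1<b}\overline{n_1\alpha\beta}^{-m-1}\bigr)$. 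Summing over $m\geq1$ reproduces verbatim the two infinite series of the statement, and the sequences $(u_m)$ and $(v_m)$ are the ones already shown to satisfy $u_0=v_0=b^2$ with \eqref{eq:recurru} and \eqref{eq:recurrv}. There is no real obstacle: the two points that deserve a line of care are the absolute-convergence justification for interchanging the finite sum of series with the infinite summation, and the small case split on whether $\alpha=0$ needed to see the level-$0$ piece collapse to $b^2\sum_{b\leq n<b^2}1/n$.
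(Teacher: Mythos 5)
Your proposal is correct and follows exactly the paper's own (very brief) proof: substitute the expansions \eqref{eq:unseries} and \eqref{eq:vnseries} into Proposition \ref{prop:K} and regroup, using $u_0=v_0=b^2$ to cancel the leading terms of the two $\overline{n_1\alpha\beta}$ sums and to absorb the $\delta_{\alpha\neq0}U(\overline{\alpha\beta})$ leading term into the full two-digit sum $b^2\sum_{b\leq n<b^2}1/n$. The only difference is that you spell out the absolute-convergence justification and the $\alpha=0$ case split, which the paper leaves implicit.
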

\begin{proof}
  This is a direct corollary to Proposition \ref{prop:K} and the series
  expansion \eqref{eq:unseries} and \eqref{eq:vnseries} of the $U(n)$ and
  $V(n)$ quantities.  As $u_0 = v_0 = b^2$ we could conveniently regroup the
  zero-th moment contributions as a single sum over all integers with two
  digits (independently of whether $\alpha$ is zero or not).
\end{proof}

\section{The \texorpdfstring{$\alpha=\beta$}{alpha=beta} case}

We turn to the $\alpha=\beta$ case, which is more complex.  The main players
will not be the measures $\mu$ and $\nu$,
but (with notations here as measures on the interval $[0,1)$):
\begin{equation}
  \sigma = \mu-\mu_\alpha\qquad\tau = \nu-\nu_{\alpha}
\end{equation}
It is their moments which are now serving to define the $(u_m)$ and $(v_m)$ sequences.
\begin{align}
    \label{eq:lambdamoments}
  u_m &= \sigma(x^m) = \int_{[0,1)} x^m \sigma(\dx)\\
  v_m &= \tau(x^m) = \int_{[0,1)} x^m \tau(\dx)
\end{align}
According to Propositions \ref{prop:mumass} and \ref{prop:numass} there holds
$u_0 = \sigma([0,1)) = b^2$ and $v_0=\tau([0,1))=b(b-1)$.  We first consider
$\sigma$ and explain the route to $K_0$.

The basic integration Lemma \ref{lem:int} is available and equation
\eqref{eq:intmud} tells us that for any bounded function $f$ on $[0,b)$ one
has
\begin{equation}\label{eq:ddintmualpha}
  b\int_{[0,1)}f(bx)\mu_\alpha(\dx) = \int_{[0,1)} f(\alpha + x)\sigma(\dx)
\end{equation}
and for $d\neq\alpha$:
\begin{equation}
  b\int_{[0,1)}f(bx)\mu_d(\dx) = 
 \int_{[0,1)} f(d + x)\sigma(\dx) + \int_{[0,1)} f(d + x)\mu_\alpha(\dx)
\end{equation}
We sum these for all $d\neq\alpha$ and observe that
$\sum_{d\neq\alpha}\mu_d=\sigma-\delta_{\emptyset}$ so
\begin{align}
  b\int_{[0,1)}f(bx)\sigma(\dx) - bf(0) &=
  \int_{[0,1)} \sum_{d\neq\alpha}f(d + x)\sigma(\dx)
+ \int_{[0,1)} \sum_{d\neq\alpha}f(d + x)\mu_\alpha(\dx) \\
&= \int_{[0,1)} \sum_{d\neq\alpha}f(d + x)\sigma(\dx)
+ \frac1b \int_{[0,1)} \sum_{d\neq\alpha}f(d + \frac{\alpha + x}b)\sigma(\dx)\notag
\end{align}
We used \ref{eq:ddintmualpha} to reformulate the last term.  Consequently now
using $\mu$ (observe that the first summation is here over all digits):
\begin{equation}\label{eq:ddintmuf}
    b\int_{[0,1)}f(bx)\mu(\dx) - bf(0) = \int_{[0,1)} \sum_{0\leq d<b}f(d + x)\sigma(\dx)
+ \frac1b \int_{[0,1)} \sum_{\substack{0\leq d < b\\d\neq\alpha}}f(d + \frac{\alpha + x}b)\sigma(\dx)
\end{equation}
which can also be expressed with now a bounded function $g$ on $[0,1)$:
\begin{equation}
  \label{eq:ddintmug}
  \int_{[0,1)}g(x)\mu(\dx)
= g(0) + \frac1b \int_{[0,1)} \sum_{0\leq d<b}g(\frac{d + x}b)\sigma(\dx)
+ \frac1{b^2} \int_{[0,1)} 
               \sum_{\substack{0\leq d < b\\d\neq\alpha}}
                    g(\frac{d b + \alpha + x}{b^2})\sigma(\dx)
\end{equation}
We will also need the transformation formula against the $\sigma$ measure, if
we want to iterate:
\begin{equation}\label{eq:ddintsigmag}
    \int_{[0,1)}g(x)\sigma(\dx)
= g(0) + \frac1b \int_{[0,1)} \sum_{\substack{0\leq d < b\\d\neq\alpha}}
                                   g(\frac{d + x}b)\sigma(\dx)
+ \frac1{b^2} \int_{[0,1)} \sum_{\substack{0\leq d < b\\d\neq\alpha}}
                               g(\frac{d b + \alpha + x}{b^2})\sigma(\dx)    
\end{equation}
Arguing as precedently we have
\begin{equation}
  K_0 = \int_{[b^{-1},1)}\frac{\mu(\dx)}{x}
\end{equation}
So we apply \eqref{eq:ddintmug} to $g$ defines as $g(x) = x^{-1}$
for $b^{-1}\leq x < 1$ and zero elsewhere.  This gives;
  \begin{equation}
    \label{eq:Kddint2}
    K_0 = \sum_{n_1=1}^{b-1}\int_{[0,1)}\frac{\sigma(\dx)}{n_1 + x} 
    + \sum_{\substack{1\leq n_1<b\\n_1\neq\alpha}}
                        \int_{[0,1)}\frac{\sigma(\dx)}{n_1 b + \alpha + x}
  \end{equation}
  Let's for positive integer $n$ define the Stieltjes (up to sign) transforms:
\begin{equation}\label{eq:unddseries}
  U(n) = \int_{[0,1)}\frac{\sigma(\dx)}{n+x} 
= \frac{b^2}n + \sum_{m=1}^\infty (-1)^m\frac{u_m}{n^{m+1}}
\end{equation}
The same arguments as used earlier show that with $l=l(n)$ being the number of
digits of $n$ one has
\begin{equation}\label{eq:unddKsum}
  U(n) = \sum_{\substack{\ld_l(m)=n\\
               \text{trailing part has no }\alpha\alpha\\
               \text{and its leading digit is not }\alpha}}\frac1m
\end{equation}
From this or from the equation \eqref{eq:ddintsigmag} with $g(x) = (n+x)^{-1}$ we
get this functional recursive identity:
\begin{equation}
  \label{eq:ddUeq}
  U(n) = \frac1n 
      + \sum_{\substack{0\leq d<b\\d\neq \alpha}} \Bigl(U(nb+d) + U(nb^2+db+\alpha)\Bigr)
\end{equation}
 So the expression \eqref{eq:Kddint2} for $K_0$ becomes, using $\overline{d_1...d_k}$ notations for integers built from digits:
\begin{equation}
  K_0 =\sum_{1\leq n_1 <b}\Bigl(U(n_1) + \delta_{n_1\neq\alpha}U(\overline{n_1\alpha})\Bigr)
\end{equation}
and the identity \eqref{eq:ddUeq} gives
\begin{equation}
  K_0
  =\sum_{n_1=1}^{b-1}\frac1{n_1} + \sum_{\substack{1\leq n_1<b\\0\leq n_2<b\\n_2\neq\alpha}}
    \Bigl(U(\overline{n_1n_2}) + U(\overline{n_1n_2\alpha})\Bigr)
    + \delta_{\alpha\neq0}\sum_{n_1\neq\alpha} U(\overline{n_1\alpha})
\end{equation}
which we reformulate as
\begin{equation}
  K_0 =\sum_{n_1=1}^{b-1}\frac1{n_1} +
    \sum_{\substack{1\leq n_1<b\\0\leq n_2<b\\(n_1,n_2)\neq(\alpha,\alpha)}}
      U(\overline{n_1n_2})
    + \sum_{\substack{1\leq n_1<b\\0\leq n_2<b\\n_2\neq\alpha}}
     U(\overline{n_1n_2\alpha})
\end{equation}
Define now
\begin{equation}\label{eq:gammathetaprime}
  \gamma_j' = \sum_{\substack{0\leq d<b\\d\neq\alpha}} d^j\qquad 
  \theta_j' = \sum_{\substack{0\leq d<b\\d\neq\alpha}} (db+\alpha)^j
\end{equation}
Then using \eqref{eq:ddintsigmag} with $g(x)= x^m$, $m\geq1$, we get
\begin{equation}
  b^{m+1}u_m = \sum_{j=0}^m\binom{m}{j}\gamma_j'u_{m-j} 
             + b^{-m-1}\sum_{j=0}^m\binom{m}{j}\theta_j'u_{m-j}
\end{equation}
\begin{equation}
\label{eq:ddrecurru}
  (b^{m+1} - (b-1) - (b-1)b^{-m-1})u_m =
     \sum_{j=1}^m\binom{m}{j}(\gamma_j'+b^{-m-1}\theta_j')u_{m-j} 
\end{equation}
\begin{theorem}\label{thm:K0dd}
  Let $b>1$ and let $\alpha\in\{0,\dots,b-1\}$.  Let $K_0$ be the sum of the
  $1/n$'s for those integers having no $\alpha\alpha$ sub-string in their
  minimal representation with $b$-digits.  Then
\[
  \begin{split}
    K_0 = \sum_{n_1=1}^{b-1}\frac1{n_1}
    + b^2
    \Biggl(
    \sum_{\substack{1\leq n_1<b\\0\leq n_2<b\\(n_1,n_2)\neq(\alpha,\alpha)}}
    \frac1{n_1b+n_2} +\sum_{\substack{1\leq n_1<b\\0\leq n_2<b\\n_2\neq\alpha}}
    \frac{1}{n_1b^2+n_2b+\alpha}
    \Biggr)
    \\+\sum_{m=1}^\infty (-1)^m u_m
    \Biggl(
    \sum_{\substack{1\leq n_1<b\\0\leq n_2<b\\(n_1,n_2)\neq(\alpha,\alpha)}}
    \frac1{(n_1b+n_2)^{m+1}} +\sum_{\substack{1\leq n_1<b\\0\leq n_2<b\\n_2\neq\alpha}}
    \frac{1}{(n_1b^2+n_2b+\alpha)^{m+1}}
    \Biggr)
  \end{split}
\]
where $(u_m)$ is a positive sequence decreasing to zero and
verifying the recurrence \ref{eq:ddrecurru} with $u_0=b^2$.
\end{theorem}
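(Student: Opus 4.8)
The plan is to obtain the stated identity as a direct corollary of the reformulation of $K_0$ established just above --- namely $K_0 = \sum_{n_1=1}^{b-1}\tfrac1{n_1} + \sum_{(n_1,n_2)\neq(\alpha,\alpha)}U(\overline{n_1n_2}) + \sum_{n_2\neq\alpha}U(\overline{n_1n_2\alpha})$, which itself rests on the recursive identity \eqref{eq:ddUeq} together with the combinatorial reading \eqref{eq:unddKsum} of $U(n)$ --- by substituting into each $U(n)$ appearing there the power-series expansion \eqref{eq:unddseries}. That expansion is just the termwise integration against the finite measure $\sigma$ of the geometric series for $(n+x)^{-1}$; it is legitimate here because every argument $n$ is a two- or three-digit integer, hence $n\geq b\geq2$, while $\sigma$ is supported in $[0,1)$, so $|x/n|\leq 1/b$ uniformly on the support.

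Next I would separate, in each expansion, the $m=0$ term $b^2/n$ from the tail $\sum_{m\geq1}(-1)^m u_m n^{-m-1}$. Collecting the $m=0$ terms over the (finite) index sets reproduces exactly the bracketed rational expression multiplied by $b^2$ in the statement. The tails can be pulled outside the finite sums over $(n_1,n_2)$ and over the three-digit integers $\overline{n_1n_2\alpha}$ and regrouped by the index $m$, because each such tail converges absolutely (it is bounded by $b^2\sum_{m\geq1}n^{-m-1}$, using $u_m\leq u_0$) and only finitely many are added; this yields the alternating double series with coefficients $(-1)^m u_m$, completing the identity.

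It remains to record the three asserted properties of $(u_m)$. The initial value $u_0=b^2$ is the total mass $\sigma([0,1)) = \mu(\bsD)-\mu(\bsD[\alpha]) = b(b+1)-b$, read off from Proposition \ref{prop:mumass} in the case $\alpha=\beta$; the recurrence \eqref{eq:ddrecurru} was already derived by inserting $g(x)=x^m$, $m\geq1$, into the transformation formula \eqref{eq:ddintsigmag}; and positivity together with strict decrease to zero is immediate from the description of $u_m$ as the $m$-th moment of the positive discrete measure $\sigma$, which is not a scalar multiple of the Dirac mass at the origin. I do not anticipate a genuine obstacle here: the statement is an assembly of facts already in hand. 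The only point that calls for care is bookkeeping --- one must check that the exclusion conditions $(n_1,n_2)\neq(\alpha,\alpha)$ and $n_2\neq\alpha$ generated by one application of \eqref{eq:ddUeq} to the two- and three-digit arguments coincide precisely with those written in the theorem, and that the single-digit contribution $\sum_{n_1=1}^{b-1}1/n_1$ is carried through untouched when the $m=0$ parts are harvested.
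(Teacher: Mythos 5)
Your proposal is correct and follows exactly the paper's route: the theorem is obtained by inserting the expansion \eqref{eq:unddseries} into the level-$2$ formula $K_0=\sum_{n_1=1}^{b-1}\frac1{n_1}+\sum_{(n_1,n_2)\neq(\alpha,\alpha)}U(\overline{n_1n_2})+\sum_{n_2\neq\alpha}U(\overline{n_1n_2\alpha})$ derived just before the statement, separating the $m=0$ terms and regrouping the finitely many absolutely convergent tails by $m$. Your verification of $u_0=\sigma([0,1))=b(b+1)-b=b^2$ and of the monotonicity of the moments matches the paper's (largely implicit) justification, so there is nothing to add.
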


Now we handle the $K_1$ sums.  We have for any bounded function $f$ on $[0,b)$
according to equation \eqref{eq:intnud}, and recalling that $\tau$ was defined
as $\nu-\nu_\alpha$:
\begin{equation}\label{eq:ddintnualpha}
  \begin{split}
    b\int_{[0,1)}f(bx)\nu_\alpha(\dx)
    = \int_{[0,1)} f(\alpha+x)(\nu-\nu_\alpha+\mu_\alpha)(\dx)
   \\ = \int_{[0,1]} f(\alpha+x)(\tau + \mu_\alpha)(\dx)
  \end{split}
\end{equation}
and for $d\neq\alpha$:
\begin{equation}
   b\int_{[0,1)}f(bx)\nu_d(\dx)
 = \int_{[0,1)} f(d+x)\nu(\dx)
 = \int_{[0,1]} f(d+x)(\tau + \nu_\alpha)(\dx)
\end{equation}
We sum these for all $d\neq\alpha$ and as
$\sum_{d\neq\alpha}\nu_d=\tau$ we obtain
\begin{gather}
  b\int_{[0,1)}f(bx)\tau(\dx)
=   \int_{[0,1)} \sum_{d\neq\alpha}f(d + x)\tau(\dx) 
+ \int_{[0,1)} \sum_{d\neq\alpha}f(d + x)\nu_\alpha(\dx) \\
= \int_{[0,1)} \sum_{d\neq\alpha}f(d + x)\tau(\dx)
+ \frac1b \int_{[0,1)} \sum_{d\neq\alpha}f(d + \frac{\alpha + x}b)(\tau+\mu_\alpha)(\dx)\\
\begin{aligned}[t]
  &= \int_{[0,1)} \sum_{d\neq\alpha}
     \Bigl(f(d + x)+\frac1bf(d + \frac{\alpha + x}b)\Bigr)\tau(\dx)\\
  &+\frac1{b^2}
    \int_{[0,1)} \sum_{d\neq\alpha}f(d + \frac{\alpha b + \alpha + x}{b^2})\sigma(\dx)
\end{aligned}
\end{gather}
We used \eqref{eq:ddintmualpha}.  Using it again in \eqref{eq:ddintnualpha} we get:
 for $f$ a function on $[0,b)$ 
\begin{equation}
  b\int_{[0,1)}f(bx)\nu_\alpha(\dx)
  = \int_{[0,1]} f(\alpha+x)\tau(\dx)
   + \frac1b\int_{[0,1]} f(\alpha+\frac{\alpha + x}{b})\sigma(\dx)
\end{equation}
Adding this to the previous equation gives
\begin{equation}\label{eq:ddnuf}
    b\int_{[0,1)}f(bx)\nu(\dx) = 
 \begin{aligned}[t]
  &\int_{[0,1)} \sum_{0\leq d<b}
     \Bigl(f(d + x) + \delta_{d\neq\alpha}\frac1bf(d + \frac{\alpha + x}b)\Bigr)\tau(\dx)\\
   &+\frac1{b}
    \int_{[0,1)} f(\alpha+\frac{\alpha + x}{b})\sigma(\dx)\\
 &+\frac1{b^2}
    \int_{[0,1)} \sum_{d\neq\alpha}f(d + \frac{\alpha b + \alpha + x}{b^2})\sigma(\dx)
\end{aligned}
\end{equation}
or equivalently for $g$ on $[0,1)$:
\begin{equation}\label{eq:ddnug}
    \int_{[0,1)}g(x)\nu(\dx) = 
\begin{aligned}[t]
  &\frac1b \int_{[0,1)} \sum_{0\leq d<b}
     \Bigl(g(\frac{d + x}b) +
     \delta_{d\neq\alpha}\frac1b g(\frac{d b + \alpha + x}{b^2})\Bigr)\tau(\dx)\\
   &+\frac1{b^2}
    \int_{[0,1)} g(\frac{\alpha b + \alpha + x}{b^2})\sigma(\dx)\\
 &+\frac1{b^3}
    \int_{[0,1)} \sum_{d\neq\alpha} 
      g(\frac{d b^2 + \alpha b + \alpha + x}{b^3})\sigma(\dx)
\end{aligned}
\end{equation}
So for any bounded function $g$ on $[0,1)$ we have:
\begin{equation}\label{eq:ddinttau}
    \int_{[0,1)}g(x)\tau(\dx) =
\begin{aligned}[t]
  &\frac1b \int_{[0,1)} \sum_{d\neq\alpha}
     \Bigl(g(\frac{d + x}b)+\frac1b g(\frac{d  b + \alpha + x}{b^2})\Bigr)\tau(\dx)\\
  &+\frac1{b^3}
    \int_{[0,1)} \sum_{d\neq\alpha}g(\frac{d b^2 + \alpha b + \alpha + x}{b^3})\sigma(\dx)
\end{aligned}
\end{equation}
Apply equation \eqref{eq:ddnug} with $g(x)$ being $x^{-1}$ if $b^{-1}\leq x <
1$ and zero elsewhere.  We know as previously that
\begin{equation}
  K_1 = \int_{[b^{-1},1)} g(x)\nu(\dx)
\end{equation}
So:
  \begin{equation}
    \label{eq:Koneddint2}
    K_1 =
    \begin{aligned}[t]
      & \sum_{n_1=1}^{b-1}\int_{[0,1)}\frac{\tau(\dx)}{n_1 + x} 
       + \sum_{\substack{n_1\neq\alpha\\n_1>0}}
      \int_{[0,1)}\frac{\tau(\dx)}{n_1 b + \alpha + x}\\
      &+  \delta_{\alpha\neq0}\int_{[0,1)}\frac{\sigma(\dx)}{\alpha b + \alpha + x}
       + \sum_{\substack{n_1\neq\alpha\\n_1>0}}
      \int_{[0,1)}\frac{\sigma(\dx)}{n_1 b^2 + \alpha b + \alpha + x}
    \end{aligned}
  \end{equation}
We define now:
\begin{equation}\label{eq:vnddseries}
  V(n) = \int_{[0,1)}\frac{\tau(\dx)}{n+x} 
= \frac{b(b-1)}n + \sum_{m=1}^\infty (-1)^m\frac{v_m}{n^{m+1}}
\end{equation}
Then we have with $l=l(n)$ being the number of digits of $n$:
\begin{equation}\label{eq:vnddKsum}
  V(n) = \sum_{\substack{\ld_l(m)=n\\
               \text{trailing part has exactly one }\alpha\alpha\\
               \text{and its leading digit is not }\alpha}}\frac1m
\end{equation}
And from equation \eqref{eq:ddinttau} we get the recursive property:
\begin{equation}\label{eq:ddVeq}
  V(n) =  \sum_{\substack{0\leq n_1\leq b\\n_1\neq\alpha}} \Bigl(V(nb+n_1)
                 + V(nb^2+n_1b+\alpha)
                 + U(nb^3+n_1b^2+\alpha b+ \alpha)\Bigr)
\end{equation}


We can rewrite our formula \eqref{eq:Koneddint2} for $K_1$ now as:
\begin{equation}
  K_1 = \sum_{0<n_1<b} V(n_1) 
    + \sum_{\substack{0<n_1<b\\n_1\neq\alpha}} V(\overline{n_1\alpha})
    + \delta_{\alpha\neq0} U(\overline{\alpha\alpha})
    + \sum_{\substack{0<n_1<b\\n_1\neq\alpha}} U(\overline{n_1\alpha\alpha})
\end{equation}
which is still a ``level $1$'' equation, so we apply the functional identity
\eqref{eq:ddVeq} to the first sum in order to reach level $2$ and get
after some rearrangement:
\begin{equation}\label{eq:ddK1}
  K_1 = \delta_{\alpha\neq0} U(\overline{\alpha\alpha}) 
       + \sum_{\substack{1\leq n_1<b\\0\leq n_2<b\\(n_1,n_2)\neq(\alpha,\alpha)}}
         V(\overline{n_1n_2})
  \begin{aligned}[t]
    & 
    + \sum_{\substack{1\leq n_1<b\\n_1\neq\alpha}} U(\overline{n_1\alpha\alpha})
   \\
   &+
    \sum_{\substack{1\leq n_1<b\\0\leq n_2<b\\n_2\neq\alpha}}
    \Bigl( 
     V(\overline{n_1n_2\alpha})
    +  U(\overline{n_1n_2\alpha\alpha})\Bigr)
  \end{aligned}
\end{equation}
From equation \eqref{eq:ddinttau} we get the recurrence of the moments
$v_m$. First, using therein a constant function we recover $v_0 = b(b-1)$ from
the fact that $u_0=\sigma([0,1))=b^2$.  More generally for all $m\geq0$:
\begin{equation}
  b^3 v_m = 
   \sum_{j=0}^m\binom{m}{j}\sum_{\substack{0\leq d<b\\d\neq\alpha}}
    \Bigl((b^{2-m}d^j
         + b^{1-2m} (db+\alpha)^j) v_{m-j}
         + b^{-3m}(db^2+\alpha b + \alpha)^j u_{m-j}\Bigr)
\end{equation}
Recall the definition \eqref{eq:gammathetaprime} of the quantities $\gamma_j'$ and
$\theta_j'$ for $j\geq0$.  We now also define
\begin{equation}
  \kappa_j' = \sum_{\substack{0\leq d<b\\d\neq\alpha}}(db^2+\alpha b + \alpha)^j
\end{equation}
and obtain the fundamental recurrence allowing to obtain the $(v_m)$ sequence
(and confirming $v_0 = b-1$):
\begin{equation}\label{eq:ddrecurrv}
  \begin{aligned}
    &(b^{m+1} - (b-1) - (b-1)b^{-m-1})v_m
\\
    &=
    \sum_{j=1}^m\binom{m}{j} \Bigl(
    (\gamma_j' + b^{-m-1} \theta_j')v_{m-j} + b^{-2m-2}\kappa_j' u_{m-j})
    \Bigr)
    + (b - 1) b^{-2m-2} u_m
  \end{aligned}
\end{equation}
Hence the theorem concluding this paper:
\begin{theorem}\label{thm:K1dd}
  Let $b>1$ and $\alpha\in\{0,\dots,b-1\}$.  Let $K_1$ be the sum of the
  reciprocals of integers having in base $b$ exactly one occurrence of the
  string $\alpha\alpha$.

  Let $(u_m)$ be the sequence verifying the recurrence \eqref{eq:ddrecurru}
  for $m\geq1$ and such that $u_0=b^2$ and let $(v_m)$ verify the recurrence
  \eqref{eq:ddrecurrv} for $m\geq0$.  These numbers are positive and decrease
  with limit zero.  Let $U$ and $V$ be the functions defined by the (inverse)
  power series:
\[ U(n) = \frac{b^2}{n} + \sum_{m=1}^\infty (-1)^m\frac{u_m}{n^{m+1}}
 \qquad 
V(n) = \frac{b(b-1)}{n} + \sum_{m=1}^\infty (-1)^m\frac{v_m}{n^{m+1}}
\]
Then $K_1$ is a finite sum of evaluations of the
functions $U$ and $V$ on certain integers having from $2$ to $4$ digits:
\begin{equation*}
  K_1 = \delta_{\alpha\neq0} U(\overline{\alpha\alpha}) 
    + \sum_{\substack{1\leq n_1<b\\0\leq n_2<b\\(n_1,n_2)\neq(\alpha,\alpha)}} 
      V(\overline{n_1n_2})
  \begin{aligned}[t]
    & 
    + \sum_{\substack{1\leq n_1<b\\n_1\neq\alpha}} U(\overline{n_1\alpha\alpha})
   \\
   &+
    \sum_{\substack{1\leq n_1<b\\0\leq n_2<b\\n_2\neq\alpha}}
    \Bigl( 
     V(\overline{n_1n_2\alpha})
    +  U(\overline{n_1n_2\alpha\alpha})\Bigr)
  \end{aligned}
\end{equation*}
\end{theorem}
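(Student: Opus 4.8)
The plan is to assemble pieces that are already in hand rather than to prove anything new: the displayed formula for $K_1$ is literally equation \eqref{eq:ddK1}, so the work reduces to three verifications — that the functions $U$ and $V$ appearing there are the ones described by the stated inverse power series, that the sequences generated by the recurrences \eqref{eq:ddrecurru} and \eqref{eq:ddrecurrv} are the moment sequences $u_m=\sigma(x^m)$ and $v_m=\tau(x^m)$, and that those sequences are positive, strictly decreasing, and tend to $0$.

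First I would re-derive \eqref{eq:ddK1}. Starting from $K_1=\int_{[b^{-1},1)}\nu(\dx)/x$, I apply the transformation identity \eqref{eq:ddnug} to the function equal to $x^{-1}$ on $[b^{-1},1)$ and to $0$ on $[0,b^{-1})$, which produces the ``level $1$'' formula \eqref{eq:Koneddint2}; in it every integral has the form $\int\tau(\dx)/(n+x)$ or $\int\sigma(\dx)/(n+x)$ with $n$ a one- or two-digit integer, that is a value $V(n)$ or $U(n)$ in the sense of \eqref{eq:vnddseries}, \eqref{eq:unddseries}. I then substitute the recursion \eqref{eq:ddVeq} into the single-digit terms $V(n_1)$, $0<n_1<b$, and collect the resulting two-, three- and four-digit contributions together with the two- and three-digit terms already present to reach \eqref{eq:ddK1}. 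Since every argument that occurs is at least $b\geq2$, the geometric series $\sum_{m\geq0}(x/n)^m$ converges uniformly on $[0,1)$, so $(n+x)^{-1}$ may be expanded and integrated term by term against $\sigma$, resp. $\tau$; this yields exactly the two inverse power series of the statement, with $u_0=\sigma([0,1))=b^2$ and $v_0=\tau([0,1))=b(b-1)$ coming from Propositions \ref{prop:mumass} and \ref{prop:numass}.

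Next I would handle the moments. Testing the transformation identity \eqref{eq:ddintsigmag} against $g(x)=x^m$ for $m\geq1$ and expanding $(d+x)^j$ and $(db+\alpha+x)^j$ by the binomial theorem isolates $u_m$ and reproduces \eqref{eq:ddrecurru}; the same computation applied to \eqref{eq:ddinttau} (which also feeds in $\sigma$-integrals) gives \eqref{eq:ddrecurrv}, whose $m=0$ instance recovers $v_0=b(b-1)$ from $u_0=b^2$, so the recursively defined sequences coincide with $u_m=\sigma(x^m)$ and $v_m=\tau(x^m)$. For the qualitative statement I would use that $\sigma=\mu-\mu_\alpha$ and $\tau=\nu-\nu_\alpha$ are genuine positive measures, being differences of a measure and one of its restrictions, supported on $[0,1)$ and not reduced to a multiple of $\delta_0$ (each charges the $b$-imal points coming from the infinitely many positive-length strings avoiding $\alpha\alpha$ whose leading digit is not $\alpha$). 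Then $x^{m+1}<x^m$ off the origin gives $u_m-u_{m+1}=\int x^m(1-x)\,\sigma(\dx)>0$, and likewise for the $v_m$, while $x^m\to0$ pointwise on $[0,1)$ with the constant $1$ integrable forces $u_m\to0$ and $v_m\to0$ by dominated convergence.

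The main obstacle is the bookkeeping in the first step: passing from \eqref{eq:Koneddint2} to \eqref{eq:ddK1} means distributing \eqref{eq:ddVeq} over the single-digit $V$-terms and then merging the new two-, three- and four-digit terms with the ones already present without double-counting, which is precisely where the exclusion $(n_1,n_2)\neq(\alpha,\alpha)$, the side conditions $n_2\neq\alpha$, and the boundary term $\delta_{\alpha\neq0}U(\overline{\alpha\alpha})$ must be reconciled. Everything afterwards — the termwise integration and the standard facts about moments of a compactly supported positive measure — is routine.
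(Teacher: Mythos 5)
Your proposal is correct and follows essentially the same route as the paper: the theorem is exactly the assembly of equation \eqref{eq:ddK1} (obtained from \eqref{eq:Koneddint2} by distributing \eqref{eq:ddVeq} over the single-digit $V$-terms) with the Stieltjes-transform expansions \eqref{eq:unddseries}, \eqref{eq:vnddseries} and the moment recurrences \eqref{eq:ddrecurru}, \eqref{eq:ddrecurrv} derived from \eqref{eq:ddintsigmag} and \eqref{eq:ddinttau}. Your justification of positivity, strict decrease, and convergence to zero of the moments via $\sigma$ and $\tau$ being positive measures on $[0,1)$ not concentrated at the origin matches the paper's (briefly stated) argument.
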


\section{Acknowledgements}

The author thanks greatly Robert Baillie for drawing his attention to the
challenge of numerically evaluating the $K_i$ sums, which is the problem
solved here for $i=1$ and in a new way for $i=0$, and for our fruitful
exchanges on this topic.


\begin{thebibliography}{1}



\bibitem{schmelzerbaillie}
Thomas Schmelzer and Robert Baillie.
\newblock Summing a curious, slowly convergent series.
\newblock {\em Amer. Math. Monthly}, 115(6):525--540, 2008.
\newblock \url{https://doi.org/10.1080/00029890.2008.11920559}.

\bibitem{bailliearxiv}
Robert Baillie.
\newblock Summing the curious series of {K}empner and {I}rwin.
\newblock {\em arXiv}, 2008--2024 (v1--v6), arXiv:0806.4410.
\newblock \url{https://arxiv.org/abs/0806.4410}.

\bibitem{irwin}
Frank Irwin.
\newblock A {C}urious {C}onvergent {S}eries.
\newblock {\em Amer. Math. Monthly}, 23(5):149--152, 1916.
\newblock \url{https://doi.org/10.2307/2974352}.

\bibitem{burnolkempner}
Jean-Fran{\c c}ois Burnol.
\newblock Moments in the exact summation of the curious series of {K}empner
  type.
\newblock {\em Feb 4, 2024}, pages 1--10, 2024, arxiv:2402.08525.
\newblock \url{https://arxiv.org/abs/2402.08525}.

\bibitem{burnolirwin}
Jean-Fran{\c c}ois Burnol.
\newblock Moments in the summation of {I}rwin series.
\newblock {\em Feb 11, 2024}, pages
1--17, 2024, arxiv:2402.09083.
\newblock \url{http://arxiv.org/abs/2402.09083}.

\bibitem{fahri}
Bakir Farhi.
\newblock A curious result related to {K}empner's series.
\newblock {\em Amer. Math. Monthly}, 115(10):933--938, 2008.
\newblock \url{https://doi.org/10.1080/00029890.2008.11920611}.

\bibitem{borweinborwein}
Jonathan~M. Borwein and Peter~B. Borwein.
\newblock {\em Pi and the {AGM}}.
\newblock Canadian Mathematical Society Series of Monographs and Advanced
  Texts. John Wiley \& Sons, Inc., New York, 1987.
\newblock A study in analytic number theory and computational complexity, A
  Wiley-Interscience Publication.


\end{thebibliography}
\end{document}